\documentclass[11pt]{amsart}

\usepackage{latexsym}
\usepackage{amssymb,amsfonts,amsmath,amsthm}
\usepackage{graphicx}
\usepackage{enumerate}
\usepackage{mdwlist}
\usepackage[margin=1.3in]{geometry}

\addtolength{\hoffset}{-0.5cm}
%\addtolength{\evensidemargin}{-0.4cm}
\addtolength{\textwidth}{1cm}

\newtheorem{theorem}{Theorem}  %[chapter]

\newtheorem{lemma}[theorem]{Lemma}

\newtheorem{proposition}[theorem]{Proposition}
\newtheorem{conjecture}[theorem]{Conjecture}
\newtheorem{definition}[theorem]{Definition}

\def\COMMENT#1{}
\def\TASK#1{}

\numberwithin{theorem}{section}
\numberwithin{equation}{section}

\newdimen\margin   % needed for macros \textdisplay & \ltextdisplay
\def\textno#1&#2\par{%
   \margin=\hsize
   \advance\margin by -4\parindent
          \setbox1=\hbox{\sl#1}%
   \ifdim\wd1 < \margin
      $$\box1\eqno#2$$%
   \else
      \bigbreak
      \hbox to \hsize{\indent$\vcenter{\advance\hsize by -3\parindent
      \it\noindent#1}\hfil#2$}%
      \bigbreak
   \fi}
\def\noproof{{\unskip\nobreak\hfill\penalty50\hskip2em\hbox{}\nobreak\hfill%
       $\square$\parfillskip=0pt\finalhyphendemerits=0\par}\goodbreak}
\def\endproof{\noproof\bigskip}

\title{Fractional and integer matchings in uniform hypergraphs}

\author{Daniela K\"uhn, Deryk Osthus and Timothy Townsend}
\thanks {The research leading to these results was partially supported by the  European Research Council
under the European Union's Seventh Framework Programme (FP/2007--2013) / ERC Grant
Agreement no. 258345 (D.~K\"uhn) and 306349 (D.~Osthus).}

\begin{document}

\begin{abstract}
Our main result improves bounds of Markstr\"om and Ruci\'nski on the minimum $d$-degree which forces a perfect matching in a $k$-uniform hypergraph on $n$ vertices. We also extend bounds of Bollob\'as, Daykin and Erd\H{o}s by asymptotically determining the minimum vertex degree which forces a matching of size $t< n/2(k-1)$ in a $k$-uniform hypergraph on $n$ vertices. Further asymptotically tight results on $d$-degrees which force large matchings are also obtained. Our approach is to prove fractional versions of the above results and then translate these into integer versions.
\end{abstract}

\date{\today}

\maketitle 

\section{Introduction}\label{Introduction}
\subsection{Large matchings in hypergraphs with large degrees}\label{subsection 1.1}

A $k$-\textit{uniform hypergraph} is a pair $G=(V,E)$ where $V$ is a finite set of vertices and the edge set $E$ consists of unordered $k$-tuples of elements of $V$. A \textit{matching} (or \textit{integer matching}) $M$ in $G$ is a set of disjoint edges of $G$. The \textit{size} of $M$ is the number of edges in $M$. We say $M$ is \textit{perfect} if it has size $|V|/k$. Given $S\in \binom{V}{d}$, where $0\leq d\leq k-1$, let $deg_G(S)=|\{e\in E : S\subseteq e\}|$ be the \textit{degree} of $S$ in $G$. Let $\delta_d(G)=\min_{S\in \binom{V}{d}}\{deg_G(S)\}$ be the \textit{minimum d-degree} of $G$. When $d=1$, we refer to $\delta_1(G)$ as the \textit{minimum vertex degree} of $G$. Note that $\delta_0(G)=|E|$.

For integers $n$, $k$, $d$, $s$ satisfying $0\leq d\leq k-1$ and $0\leq s\leq n/k$, we let $m_d^s(k,n)$ denote the minimum integer $m$ such that every $k$-uniform hypergraph $G$ on $n$ vertices with $\delta_d(G)\geq m$ has a matching of size $s$. We write $o(1)$ to denote some function that tends to $0$ as $n$ tends to infinity. The following degree condition for forcing perfect matchings has been conjectured in \cite{Absorbing, KO} and has received much attention recently.
\begin{conjecture}\label{main conjecture}
Let $n$ and $1\leq d\leq k-1$ be such that $n$, $d$, $k$, $n/k\in \mathbb{N}$. Then
$$m_d^{n/k}(k,n)= \left( \max\left\{\frac{1}{2}, 1-\left(\frac{k-1}{k}\right)^{k-d}\right\}+o(1)\right)\binom{n-d}{k-d}.$$ 
\end{conjecture}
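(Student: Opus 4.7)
The plan is to follow the approach announced in the abstract: first prove the fractional analogue of Conjecture~\ref{main conjecture}, then translate the fractional bound to the integer setting via the absorbing method. The lower bound comes from two classical constructions. The \emph{space barrier} takes the hypergraph of all $k$-edges meeting a fixed set $A$ with $|A| = n/k - 1$; this has $\delta_d(G) = (1-o(1))(1 - ((k-1)/k)^{k-d})\binom{n-d}{k-d}$ but clearly no matching of size $n/k$. The \emph{divisibility/parity barrier} uses a bipartition $V = A \cup B$ with a parity restriction on $|e \cap A|$, tuned so that $|A|$ modulo a suitable integer obstructs a perfect matching; this yields the $1/2$ threshold. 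Both barriers survive in the fractional setting, so Conjecture~\ref{main conjecture} and its fractional analogue have the same asymptotic answer.

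For the fractional upper bound, a fractional matching is a function $w : E \to [0,1]$ with $\sum_{e \ni v} w_e \leq 1$ for every $v \in V$, perfect if equality holds everywhere. By LP duality, a fractional perfect matching exists if and only if every fractional vertex cover has total weight at least $n/k$. Starting from a minimum fractional cover $y$, I would classify vertices by the value of $y(v)$, count edges between classes using the $d$-degree hypothesis, and argue that whenever $\delta_d(G)$ exceeds the claimed threshold, $y$ must essentially realise one of the two extremal constructions above. A stability/robustness argument then contradicts the hypothesis, producing the required lower bound on the cover and hence the fractional theorem.

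To pass from the fractional to the integer result, build an absorbing set $A \subseteq V$ of size $o(n)$ with the property that for any sufficiently small $W \subseteq V \setminus A$ with $k \mid |W|$, the induced hypergraph on $A \cup W$ has a perfect matching; this uses the standard absorbing-lemma machinery, where the degree hypothesis guarantees abundant absorbing configurations. Then apply the fractional theorem to $G - A$ to obtain a fractional near-perfect matching there, round it to an integer matching covering all but $o(n)$ vertices (via a random-greedy selection or a regularity-based rounding), and finish by absorbing the leftover into $A$.

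The main obstacle lies in the intermediate regime where the space and parity barriers are \emph{simultaneously} nearly tight. There one cannot afford to lose any constant on either side of the $\max$, so the stability step of the fractional LP argument has to pin down the structure of a near-minimum cover very precisely, and the absorbing step must handle both types of obstruction at once. Managing this crossover, especially for small $d$ where the $1/2$ term becomes the binding constraint, is exactly what has kept Conjecture~\ref{main conjecture} open for most parameter ranges, and any attack should expect the bulk of the work to go into this stability/robustness argument rather than into either of the two "pure" regimes on their own.
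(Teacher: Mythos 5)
This statement is Conjecture~\ref{main conjecture}; the paper does not prove it, and you should be careful not to present a roadmap as if it were a proof of an open problem. The paper's actual contribution is the improved upper bound of Theorem~\ref{minimum degree theorem actual perfect}, obtained by exactly the high-level framework you describe (edge-density result for fractional matchings via LP duality, transfer to $d$-degree conditions, then fractional-to-integer conversion via regularity and absorbing). So your identification of the framework is correct, and your descriptions of the two lower-bound constructions match the ones in the paper.

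However, there is a substantive error in your plan. You assert that ``both barriers survive in the fractional setting, so Conjecture~\ref{main conjecture} and its fractional analogue have the same asymptotic answer.'' This is false: the parity/divisibility construction $H'$ is an obstruction only to \emph{integer} perfect matchings. Because fractional matchings are not constrained by any parity counting argument, $H'$ has a perfect fractional matching, and the fractional threshold $f_d^{n/k}(k,n)$ is expected to be $\bigl(1-\bigl(\tfrac{k-1}{k}\bigr)^{k-d}+o(1)\bigr)\binom{n-d}{k-d}$ with no $\max\{1/2,\cdot\}$. Indeed the paper's Theorem~\ref{minimum degree theorem perfect} gives a fractional bound $\bigl(\tfrac{k-d}{k}-\tfrac{k-d-1}{k^{k-d}}\bigr)\binom{n-d}{k-d}$ that drops strictly below $\tfrac{1}{2}\binom{n-d}{k-d}$ for $d \ge k/2$ and $k$ not too small. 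Consequently your proposed stability analysis of a minimum fractional cover, aimed at showing the cover ``realises one of the two extremal constructions,'' cannot possibly produce $1/2$ as a threshold, because $1/2$ is never attained by the fractional LP. The $1/2$ in the conjecture has to be supplied by the integer side of the argument --- in the paper it enters through the degree requirement of the Strong Absorbing Lemma, which is why the paper's final bound for $d < k/2$ matches neither barrier exactly but sits above $1/2$ and below $\tfrac{k-d}{k}$. You do rightly flag at the end that the crossover region is what keeps this conjecture open, but the diagnosis of \emph{where} the two barriers interact is wrong as stated: they never interact at the fractional stage, and any honest attack must confront the fact that the bottleneck for small $d$ is in the fractional-to-integer conversion, not in the LP stability analysis you place at the centre of the plan.
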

The first term in the lower bound here is given by the following parity-based construction from \cite{KOtwo}. For any integers $n$, $k$, let $H'$ be a $k$-uniform hypergraph on $n$ vertices with vertex partition $A\cup B=V(H')$, such that $||A|-|B||\leq 2$ and $|A|$ and $n/k$ have different parity. Let $H'$ have edge set consisting of all $k$-element subsets of $V(H')$ that intersect $A$ in an odd number of vertices. Observe that $H'$ has no perfect matching, and that for every $1\leq d\leq k-1$ we have that $\delta_d(H')=(1/2 + o(1))\binom{n-d}{k-d}$. The second term in the lower bound is given by the hypergraph $H(n/k)$ defined as follows. Let $H(s)$ be the $k$-uniform hypergraph on $n$ vertices with edge set consisting of all $k$-element subsets of $V(H(s))$ intersecting a given (fixed) subset of $V(H(s))$ of size $s-1$, that is $H(s)=K_n^{(k)}-K_{n-s+1}^{(k)}$.

For $d=k-1$, $m_{k-1}^{n/k}(k,n)$ was determined exactly for large $n$ by R\"{o}dl, Ruci\'nski and Szemer\'edi~\cite{RRSone}. This was generalized by Treglown and Zhao~\cite{TZ}, who determined the extremal families for all $d\geq k/2$. The extremal constructions are similar to the parity based one of $H'$ above. 
This improves asymptotic bounds in \cite{Pik, RRStwo, RRSone}. Recently, Keevash, Knox and Mycroft~\cite{KKM} investigated the structure of hypergraphs whose minimum $(k-1)$-degree lies below the threshold and which have no perfect matching.

For $d<k/2$ less is known. In \cite{Large matchings} Conjecture \ref{main conjecture} was proved for $k-4\leq d\leq k-1$, by reducing it to a probabilistic conjecture of Samuels. In particular, this implies Conjecture \ref{main conjecture} for $k\leq 5$. Khan~\cite{Khanone}, and independently K\"uhn, Osthus and Treglown~\cite{KOT}, determined $m_1^{n/k}(k,n)$ exactly for $k=3$. Khan~\cite{Khantwo} also determined $m_1^{n/k}(k,n)$ exactly for $k=4$. It was shown by H\`an, Person and Schacht~\cite{Absorbing} that for $k\geq 3$, $1\leq d< k/2$ we have $m_d^{n/k}(k,n)\leq ((k-d)/k+o(1))\binom{n-d}{k-d}$. (The case $d=1$ of this is already due to Daykin and H\"aggkvist~\cite{DH}.) These bounds were improved by Markstr\"om and Ruci\'nski~\cite{MR}, using similar techniques, to
\begin{equation*}
m_d^{n/k}(k,n)\leq \left(\frac{k-d}{k}-\frac{1}{k^{k-d}}+o(1)\right)\binom{n-d}{k-d}.
\end{equation*}
Our main result improves on this bound, using quite different techniques.
\begin{theorem}\label{minimum degree theorem actual perfect}
Let $n$ and $1\leq d<k/2$ be such that $n$, $k$, $d$, $n/k\in \mathbb{N}$. Then
$$m_d^{n/k}(k,n)\leq \left(\frac{k-d}{k}-\frac{k-d-1}{k^{k-d}} +o(1)\right)\binom{n-d}{k-d}.$$
\end{theorem}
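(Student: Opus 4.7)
The plan, following the abstract, is first to establish a fractional analogue of Theorem~\ref{minimum degree theorem actual perfect} and then to convert it to the integer statement. Recall that a \emph{fractional matching} in $G$ is a function $w:E(G)\to[0,1]$ with $\sum_{e\ni v}w(e)\le 1$ for every $v\in V(G)$, and it is \emph{perfect} when equality holds at each vertex. The fractional target is: under the degree hypothesis of Theorem~\ref{minimum degree theorem actual perfect}, $G$ admits a fractional perfect matching.

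To prove the fractional version, the natural approach is via LP duality. Assume for contradiction that there is a fractional vertex cover $x:V(G)\to[0,1]$ with $\sum_v x(v)<n/k$ (truncation at $1$ is harmless since $x(v)>1$ is never needed). Set $U_t:=\{v:x(v)\ge t\}$. The cover condition forces every edge to meet $U_{1/k}$, while $\sum_v x(v)<n/k$ forces $|U_{1/k}|$ to be substantially smaller than $n$. I would then stratify $V(G)$ using a descending sequence of thresholds $1=t_0>t_1>\dots>t_{k-d}=1/k$, bound each $|U_{t_i}|$ by applying the minimum $d$-degree hypothesis to $d$-sets that meet the strata in prescribed patterns, and recover $\sum_v x(v)=\int_0^1|U_t|\,dt$ to contradict $\sum_v x(v)<n/k$. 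The improvement by the factor $k-d-1$ over Markstr\"om--Ruci\'nski should come from the fact that, within a $(k-d)$-completion of a fixed $d$-set $S$, the LP cover inequality couples all $k-d-1$ "free" vertices of the completion simultaneously, whereas the iterative averaging argument of Markstr\"om--Ruci\'nski only exploits one such vertex at a time.

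For the integer conversion, the plan is to use the absorbing method, which is by now standard in this area. First, using a degree condition much weaker than the hypothesis of Theorem~\ref{minimum degree theorem actual perfect}, build a small absorbing matching $M_{\mathrm{abs}}\subseteq E(G)$ with the property that every sufficiently small set $L\subseteq V(G)\setminus V(M_{\mathrm{abs}})$ with $|L|\in k\mathbb{Z}$ can be absorbed into a perfect matching of $G[V(M_{\mathrm{abs}})\cup L]$. Next, apply the fractional version to the residual hypergraph $G':=G-V(M_{\mathrm{abs}})$ (whose $d$-degree is essentially that of $G$) to obtain a fractional perfect matching $w'$ of $G'$, and round $w'$ to an integer matching $M'$ of $G'$ covering all but $o(n)$ vertices of $G'$, for instance by sampling edges independently with probabilities proportional to $w'$ and then greedily resolving the (few) conflicts and uncovered vertices. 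Finally, the absorbing property of $M_{\mathrm{abs}}$ handles the $o(n)$ leftover vertices, yielding a perfect matching of $G$.

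The main obstacle is the fractional step: the Markstr\"om--Ruci\'nski bound $\frac{k-d}{k}-\frac{1}{k^{k-d}}$ is obtained from an iterative averaging that, by its nature, only recovers a $k^{-(k-d)}$ saving, so pushing this up to $\frac{k-d-1}{k^{k-d}}$ seems genuinely to demand global LP information, in particular identifying and ruling out the correct extremal cover configuration by exploiting all $k-d-1$ free vertices of an edge-completion via complementary slackness. By contrast, the absorbing lemma and the rounding step, though technical to set up in this regime, are essentially off-the-shelf once the correct fractional input is in hand.
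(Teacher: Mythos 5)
Your high-level plan (fractional version via LP duality, then integer conversion via absorbing) matches the paper's strategy, and your integer-conversion step is sound in spirit (the paper uses the Weak Hypergraph Regularity Lemma to round the fractional matching rather than independent random sampling, but either works). However, your fractional step, which you yourself flag as the main obstacle, is left as a plan rather than a proof, and the plan as stated would not obviously deliver the claimed constant.

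The paper does \emph{not} directly stratify the vertex-cover weights with thresholds $t_0>\dots>t_{k-d}$ and bound each layer $|U_{t_i}|$. Instead, it proceeds in two separate reductions. First, Proposition~\ref{main proposition} converts a minimum $d$-degree condition on a $k$-uniform hypergraph $H$ into an \emph{edge-density} condition on the $(k-d)$-uniform neighbourhood hypergraphs of $d$-tuples: assuming a fractional cover $w$ of $H$ of size less than $an$, one takes the $d$-tuple $L$ of lowest total weight and shows (by a renormalisation $w'(v)=(w(v)-w(L))/(1-kw(L))$ of the cover weights) that the neighbourhood hypergraph $H(L)$ inherits a small fractional cover, hence has no fractional matching of size $an$, hence is sparse. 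Second, the fractional edge-density result (Theorem~\ref{edge-density perfect theorem}) is proved by induction on the uniformity $k$, with the inductive engine being Lemma~\ref{high degree vertices lemma}: if $G$ has at least $an$ vertices of high degree, one picks the high-degree vertex $v_0$ of least cover-weight, passes to its neighbourhood hypergraph with all other high-degree vertices deleted, applies the $(k-1)$-uniform edge-density result to get a fractional matching $M$ there, and then combines $M$ with the cover $w$ to force $\sum_v w(v)\ge an$. The contrapositive shows $G$ has few high-degree vertices, and a separate counting argument (using Baranyai's theorem to balance the degree contributions within the high-degree set $S$) completes the bound on $e(G)$. It is this two-layer reduction --- first from $d$-degree to edge density via neighbourhood hypergraphs, then from edge density in uniformity $k$ to uniformity $k-1$ via a single lowest-weight vertex --- that recovers the $(k-d-1)/k^{k-d}$ saving, and it is precisely the piece missing from your argument.
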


We also consider degree conditions that force smaller matchings. As a consequence of the results of K\"uhn, Osthus and Treglown as well as those of Khan mentioned above, $m_1^s(k,n)$ is determined exactly whenever $s\leq n/k$ and $k\leq 4$ (for details see the concluding remarks in \cite{KOT}). More generally, we propose the following version of Conjecture \ref{main conjecture} for non-perfect matchings.
\begin{conjecture}\label{partial main conjecture}
For all $\varepsilon>0$ and all integers $n$, $d$, $k$, $s$ with $1\leq d\leq k-1$ and $0\leq s\leq (1-\varepsilon)n/k$ we have
$$m_d^{s}(k,n)= \left(1-\left(1-\frac{s}{n}\right)^{k-d}+o(1)\right)\binom{n-d}{k-d}.$$ 
\end{conjecture}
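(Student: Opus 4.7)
Plan. I would follow the paper's fractional-then-integer strategy.

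For the lower bound, take $G = H(s) = K_n^{(k)} - K_{n-s+1}^{(k)}$: there is a fixed set $S$ with $|S| = s-1$ meeting every edge, so $G$ has no matching of size $s$. For a $d$-set $T$ disjoint from $S$,
$$\deg_G(T) = \binom{n-d}{k-d} - \binom{n-s+1-d}{k-d} = \left(1-\left(1-\frac{s}{n}\right)^{k-d}+o(1)\right)\binom{n-d}{k-d},$$
and a $d$-set meeting $S$ has $\deg_G(T) = \binom{n-d}{k-d}$, yielding the stated value of $\delta_d$.

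For the upper bound I would first prove a fractional analogue: if $\delta_d(G) \geq (1-(1-s/n)^{k-d}+\eps)\binom{n-d}{k-d}$ then the fractional matching number $\nu^*(G) \geq s$. By LP duality this is equivalent to every fractional vertex cover $y:V \to \mathbb{R}_{\geq 0}$ having weight $w := \sum_v y_v \geq s$. For an integer cover $y = \mathbf{1}_S$ with $|S| < s$ the lower-bound calculation immediately gives $\deg_G(T) \leq (1-(1-s/n)^{k-d}+o(1))\binom{n-d}{k-d}$ for any $T$ disjoint from $S$, contradicting the hypothesis. For general fractional $y$, a naive double count over edges $e \supseteq T$ weighted by the inequality $\sum_{v \in e \setminus T} y_v \geq 1 - \sum_{v \in T} y_v$ only yields $\deg_G(T) \leq (k-d)w/n \cdot \binom{n-d}{k-d}$, which matches $1-(1-s/n)^{k-d}$ only when $s/n$ is small. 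To recover the full bound I would threshold $y$, showing that the heavy set $Y_\tau := \{v : y_v \geq \tau\}$ carries essentially all the cover weight (after local LP adjustments that exploit multi-vertex edge constraints), and then run the integer counting argument on the thresholded cover.

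The main obstacle is the final step: converting the fractional matching of size $\geq s$ into an integer matching of size exactly $s$. The slack $s \leq (1-\eps)n/k$ leaves $\Omega(n)$ unmatched vertices, which should obviate heavy absorbing machinery. I would (i) sparsify the fractional matching via a Carath\'eodory-type reduction so its support has $O(n)$ edges with rational weights of bounded denominator; (ii) apply a Pippenger--Spencer / R\"odl nibble argument to extract an integer matching of size $(1-o(1))s$ from the weighted structure, using the fractional weights themselves to supply the quasi-randomness the nibble requires; (iii) top up to exactly $s$ via greedy extensions inside the $\Omega(n)$ unused vertices, where the hypothesized degree condition persists. Step (ii) is the delicate one, since the bare $d$-degree assumption does not yield the codegree regularity normally needed for nibble arguments, and the auxiliary weighted structure must be set up to compensate; this is why I expect the conversion, rather than the LP step, to be the real bottleneck.
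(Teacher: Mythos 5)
The statement you are attempting to prove is Conjecture~1.4 of the paper, and it is \emph{open}: the paper does not prove it. The authors prove it only in the restricted range $s\le n/2(k-d)$ (this is Theorem~1.3), which covers all of $0\le s<(1-\varepsilon)n/k$ only when $d\ge k/2$; for $1\le d<k/2$ a genuine gap remains. Your write-up implicitly claims the full conjecture, so the burden is on you to say where you go beyond what is known, and the step you flag as ``delicate'' is not where the real difficulty lies.

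Your lower bound is correct and matches the paper's $H(s)$ construction. Your step (iii)--(iv), extracting an integer matching from a fractional one when $s\le(1-\varepsilon)n/k$, is also plausibly fine in outline; the paper does it with the Weak Hypergraph Regularity Lemma (Lemma~5.6), while you suggest a nibble on a sparsified fractional matching. With $\Omega(n)$ slack either route should work, and this is not the bottleneck.

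The gap is in the fractional LP step, and it is not a small one. You correctly observe that the naive double count gives only $(k-d)s/n$, which is strictly weaker than $1-(1-s/n)^{k-d}$, and then propose to ``threshold $y$'' and rely on ``local LP adjustments that exploit multi-vertex edge constraints'' to reduce to the integer-cover case. That sentence is doing all the work, and no argument is given; general fractional covers of hypergraphs do not have a clean thresholding structure, and making them behave like indicator covers is exactly the hard part. The paper's actual route is quite different and is worth contrasting. It first proves an \emph{edge-density} $\to$ fractional-matching bound: it takes Frankl's exact resolution of the Erd\H{o}s Matching Conjecture in the range $n\ge (2s-1)k-s+1$ as a black box to get $m_0^s(k,n)$, converts via Proposition~2.4 to a bound on $f_0^s(k,n)$, and only then transfers to minimum $d$-degree using Proposition~4.1: given a fractional vertex cover $w$ of size $<an$, pick the $\lfloor \varepsilon^{1/d}n\rfloor+d$ lightest vertices, and for each $d$-tuple $L$ inside that set construct a rescaled weight $w'(v)=\max\{0,(w(v)-w(L))/(1-kw(L))\}$ which is a fractional vertex cover of the link of $L$ of size $<an$; hence many $d$-tuples have few edges, contradicting the degree hypothesis. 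Nothing in your sketch plays the role of this linking/rescaling trick or of Frankl's theorem. Moreover Frankl's theorem is precisely where the $s\le n/2(k-d)$ ceiling comes from, so to prove the conjecture for all $s\le(1-\varepsilon)n/k$ you would need, at minimum, an asymptotic version of Erd\H{o}s' matching conjecture for $s$ up to $(1-\varepsilon)n/k$, which remains open for general $k$. As written, your proposal has a missing core lemma and would not close the conjecture.
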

In fact it may be that the bound holds for all $s\leq n-C$, for some $C$ depending only on $d$ and $k$. The lower bound here is given by $H(s)$. The case $d=k-1$ of Conjecture \ref{partial main conjecture} follows%
\COMMENT{COMMENT:
Let $\varepsilon >0$. Proposition 2.1 from \cite{RRSone} gives us that there exists $n_0\in \mathbb{N}$ such that for all integers $n\geq n_0$, $m_{k-1}^{\left\lfloor n/k \right\rfloor}(k,n)\leq n/k +\varepsilon n/4$, if $n\notin k\mathbb{Z}$. Now let $n_1\geq n_0$ be an integer such that $1\leq \varepsilon n_1/4$. Suppose $G$ is a $k$-uniform hypergraph on $n\geq n_1$ vertices with $\delta_{k-1}(G)\geq an + \varepsilon n$. We will show that $G$ has a matching of size $an$. Let $a'=a+2/n$. Then $\delta_{k-1}(G)\geq a' n + \varepsilon n/2$. Let
$$x:= \left\lceil \frac{k}{k-1}\left(\frac{1}{k}-a'\right) n \right\rceil.$$
If $n+x\notin k\mathbb{Z}$ then we construct the hypergraph $G'$ by adding $x$ vertices to $G$, each of maximum degree in $G'$; otherwise we construct $G'$ by adding $x+1$ such vertices to $G$. So
$$\delta_{k-1}(G')\geq \delta_{k-1}(G)+x\geq a' n +\varepsilon n/2 +x\geq \frac{n+x}{k}+\varepsilon n/2.$$
(This last inequality holds as it rearranges to $x(k-1)/k\geq (n/k)-a' n$, which rearranges to $x\geq (k/(k-1))((1/k)-a')n$.) Hence, $\delta_{k-1}(G')\geq (1/k)|G'|+\varepsilon n/4$, and so by our original observation we have that $G'$ has a matching, $M$ say, of size
$$\left\lfloor \frac{|G'|}{k} \right\rfloor \geq \frac{n}{k} \left( 1+\frac{k}{k-1}\left( \frac{1}{k} -a'\right)\right) -1.$$
At most $x+1$ edges in $M$ can contain at least one of the new vertices, so if we delete these new vertices to obtain $G$, we have at least
$$\frac{n}{k} \left( 1+\frac{k}{k-1}\left( \frac{1}{k} -a'\right)\right) -1 -\frac{k}{k-1}\left( \frac{1}{k} -a'\right)n -1=a' n-2= an$$
edges remaining in $M$ restricted to $G$. So $M$ restricted to $G$ is a matching in $G$ of size $an$, as required.
} easily from the determination of $m_{k-1}^s(k,n)$ for $s$ close to $n/k$ in \cite{RRSone}. Bollob\'as, Daykin and Erd\H{o}s~\cite{BDE} determined $m_1^s(k,n)$ for small $s$, i.e. whenever $s<n/2k^3$. For $1\leq d\leq k-2$ we are able to determine $m_d^s(k,n)$ asymptotically for non-perfect matchings of any size at most $n/2(k-d)$. Note that this proves Conjecture \ref{partial main conjecture} in the case $k/2\leq d\leq k-2$, say.
\begin{theorem}\label{main theorem}
Let $\varepsilon >0$ and let $n$, $k$, $d$ be integers with $1\leq d\leq k-2$, and let $0\leq a< \min\{1/2(k-d),(1-\varepsilon)/k\}$ be such that $an\in \mathbb{N}$. Then
$$m^{an}_d(k,n)= \left(1-(1-a)^{k-d}+ o(1)\right)\binom{n-d}{k-d}.$$
\end{theorem}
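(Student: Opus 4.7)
My plan is to establish the lower bound from the construction $H(an)$ already described in the excerpt, and the upper bound via a ``fractional then integer'' strategy in line with the paper's announced approach. For the lower bound, $H(an)$ fixes $S\subseteq V$ with $|S|=an-1$ and takes as edges all $k$-sets meeting $S$. No matching has size exceeding $|S|<an$, and for a $d$-set $D$ disjoint from $S$ we compute $\deg(D)=\binom{n-d}{k-d}-\binom{n-d-an+1}{k-d}=(1-(1-a)^{k-d}-o(1))\binom{n-d}{k-d}$, giving the claimed lower bound on $m_d^{an}(k,n)$.

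For the upper bound, the first step is a fractional analogue: if $\delta_d(G)\geq(1-(1-a)^{k-d}+\eta)\binom{n-d}{k-d}$ for a suitable small $\eta>0$, then $G$ has a fractional matching of size at least $an+\eta'n$ for some $\eta'>0$. By LP duality, this is equivalent to showing that every fractional vertex cover $x\colon V\to[0,\infty)$ (i.e.\ $\sum_{v\in e}x_v\geq 1$ for all $e\in E$) satisfies $\sum_v x_v\geq an+\eta'n$. Setting $B_\alpha=\{v\colon x_v\geq\alpha\}$, any edge disjoint from $B_\alpha$ has $x$-sum less than $k\alpha$, so for $\alpha\leq 1/k$ the complement $V\setminus B_\alpha$ is independent. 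For a $d$-set $D\subseteq V\setminus B_\alpha$ this gives $\deg(D)\leq\binom{n-d}{k-d}-\binom{n-d-|B_\alpha|}{k-d}$, and combined with the degree hypothesis this forces $|B_\alpha|/n\geq a+\eta''$ for some $\eta''>0$ depending on $\eta$ and $k-d$. Integrating the profile $\alpha\mapsto|B_\alpha|$ via $\sum_v x_v=\int_0^\infty|B_\alpha|\,d\alpha$, and combining with the edge-averaged cover inequality $\sum_{e\supseteq D}\sum_{v\in e}x_v\geq\deg(D)$ for a carefully chosen $d$-set $D$, should yield the required bound on $\sum_v x_v$. The second step is to convert the fractional matching of size $an+\eta'n$ into an integer matching of size at least $an$ using the fractional-to-integer translation developed earlier in the paper; this typically combines an absorbing structure (to handle a small residual) with a R\"odl-nibble or Pippenger--Spencer argument on the regular bulk.

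The main obstacle is obtaining the sharp constant $1-(1-a)^{k-d}$ in the fractional step: the simple arguments just mentioned (the threshold at $\alpha=1/k$, global edge-summing, or averaging $x(D)$ over all $d$-sets) each give $\sum_v x_v$ lower bounds of order $n/k$ or $c(n-d)/(k-d)$, off by a constant factor from $an$ in the regime $a<1/(2(k-d))$. Closing this gap will require a more global combination of the threshold and edge-counting inequalities, most likely by invoking complementary slackness of an optimal dual pair $(w,x)$ to pin down how the cover mass distributes relative to the degree profile. The standing hypothesis $a<1/(2(k-d))$ keeps any near-extremal cover below $n/2$ in size and rules out the parity-based obstruction responsible for the $\max\{1/2,\ldots\}$ term in Conjecture \ref{main conjecture}, which is what should make the fractional bound cleaner in this regime than for the perfect-matching problem.
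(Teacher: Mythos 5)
Your lower bound via $H(an)$ is correct and matches the paper. But the fractional upper bound, which is the heart of the proof, has a genuine gap that you yourself identify and do not close. Your super-level-set argument does establish $|B_\alpha|\geq(a+\eta'')n$ for all $\alpha\leq 1/k$, but integrating the profile only yields $\sum_v x_v\geq (a+\eta'')n/k$, which is off by a factor of $k$ from the target $an$. Your suggested fixes (complementary slackness of an optimal dual pair, a ``more global combination'') are speculative and not worked out. The paper uses a completely different mechanism to get the sharp constant: Proposition~\ref{main proposition} reduces the $d$-degree condition on $G$ to an edge-count condition on the \emph{link hypergraph} of a carefully chosen low-weight $d$-set $L$. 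Given a cover $w$ of size less than $an$, one equalises the weights on $L$, checks $w(L)<1/k$, and rescales $w$ to $w'(v)=\max\{0,\min\{1,(w(v)-w(L))/(1-kw(L))\}\}$; restricted to $V\setminus L$ this is a cover of the link of $L$ of size still below $an$, so the link has fewer than $f_0^{an}(k-d,n-d)$ edges. Then Frankl's exact result (Theorem~\ref{Frankl theorem}) for the $(k-d)$-uniform link supplies the value $(1-(1-a)^{k-d}+o(1))\binom{n-d}{k-d}$; the standing hypothesis $a<1/2(k-d)$ is precisely what is needed to apply Frankl's theorem to the link, not merely to avoid the parity obstruction as you suggest. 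Without this link-hypergraph reduction (or something playing its role), there is no known way to extract the sharp constant directly from a cover-level-set analysis.

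Your integer conversion step also deviates from what is needed. Absorbing is unnecessary here: since $an$ is bounded strictly away from $n/k$, the paper converts the fractional matching of size $(a+\varepsilon'')n$ to an integer matching of size $an$ purely via the degree form of the Weak Hypergraph Regularity Lemma (Lemma~\ref{DFWHRL}) and the reduced hypergraph (Lemmas~\ref{reduced graph edges} and~\ref{fractional to integer matchings}); a greedy fill-in inside $\varepsilon$-regular tuples already covers all but an $O(\varepsilon')$ fraction of each cluster, which is enough slack. Absorbing is reserved for the perfect matching case (Theorem~\ref{minimum degree theorem actual perfect}). R\"odl-nibble/Pippenger--Spencer would also serve, but is not what the paper does and would require additional codegree control. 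The upshot is that your outline identifies the right two-stage strategy but is missing the key lemma (Proposition~\ref{main proposition} plus Frankl's theorem) that actually makes the fractional stage work at the sharp constant.
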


\subsection{Large matchings in hypergraphs with many edges}

In proving Theorem \ref{main theorem} it will be useful for us to consider the following related problem. A classical theorem of Erd\H{o}s and Gallai~\cite{EG} determines the number of edges in a graph which forces a matching of a given size. In 1965, Erd\H{o}s~\cite{ErdConj} made a conjecture which would generalize this to $k$-uniform hypergraphs.
\begin{conjecture}\label{Erdos main conjecture}
Let $n$, $k\geq 2$ and $1\leq s\leq n/k$ be integers. Then
$$m_0^s(k,n)=\max\left\{\binom{ks-1}{k}, \binom{n}{k}-\binom{n-s+1}{k}\right\}+1.$$
\end{conjecture}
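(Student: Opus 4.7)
The plan is to verify the lower bound constructions and then attempt the matching upper bound via Frankl's shifting method. The lower bound is straightforward: the complete hypergraph on $ks-1$ vertices (extended to $n$ vertices by isolated vertices) has $\binom{ks-1}{k}$ edges and matching number at most $s-1$, since $s$ disjoint $k$-sets require $ks$ distinct vertices. The hypergraph $H(s)$ consists of all $k$-subsets meeting a fixed set $S$ of size $s-1$, so has $\binom{n}{k}-\binom{n-s+1}{k}$ edges and matching number at most $|S|=s-1$ because each edge of a matching must use a distinct vertex of $S$. Hence any $s$-matching-free hypergraph can have at most $\max\{\binom{ks-1}{k},\binom{n}{k}-\binom{n-s+1}{k}\}$ edges.

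For the upper bound, let $F$ be an $s$-matching-free $k$-uniform family on $[n]$ of maximum size. I would apply the shift operations $S_{ij}$ for $i<j$ (replacing each $e\ni j$ with $e\not\ni i$ by $(e\setminus\{j\})\cup\{i\}$ when this set is not already present); a standard argument shows this preserves $|F|$ and does not increase the matching number, so WLOG $F$ is left-shifted. Next, induct on $s$ and $k$, splitting $F$ into the link $F_1=\{e\setminus\{1\}:1\in e\in F\}$, a $(k-1)$-uniform family, and the deletion $F_{\bar1}=\{e\in F:1\notin e\}$, a $k$-uniform family with matching number at most $s-1$; apply the inductive hypothesis to each. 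The driving structural lemma is that in a shifted $s$-matching-free family, if some edge has its maximum element at least $ks$, then by successive shifting one can either exhibit $s$ pairwise disjoint edges or force the family to concentrate on a small vertex set, which should allow reduction to one of the two extremal configurations.

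The main obstacle is that this is Erd\H{o}s's famous 1965 matching conjecture, which remains open in full generality. It has been resolved for $k=2$ (Erd\H{o}s--Gallai), for $k=3$ (Frankl), and for $n$ sufficiently large relative to $s$, with the threshold progressively improved by Frankl, Kupavskii and others via shifting combined with delicate case analysis. The true difficulty lies in the transition region where $n$ is close to $ks$: here the two extremal constructions have comparable sizes and shifted $s$-matching-free families can exhibit mixed structure that stubbornly resists direct induction. A complete proof would likely require ideas beyond pure shifting, for instance a robust stability version of the Hilton--Milner theorem combined with hypergraph regularity, or an entropy/probabilistic inequality tailored to matching-free families.
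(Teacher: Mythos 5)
This statement is labelled a \emph{conjecture} in the paper, and the paper offers no proof of it; it is Erd\H{o}s's 1965 matching conjecture, which the paper cites only for context and for the partial results it uses (Frankl's Theorem~\ref{Frankl theorem} for $s\le n/2k$, and the $k\le 4$ cases). You have correctly recognised this: your lower bound verification via the clique on $ks-1$ vertices and via $H(s)$ is sound and matches the paper's constructions, your sketch of the shifting/link-deletion framework is the standard line of attack in the literature, and you are right that the transition region $n$ close to $ks$ is where all known methods stall. So there is nothing to compare: the paper proves nothing here, and your honest conclusion that a complete proof is currently out of reach is the accurate assessment. The one small caveat worth flagging in your write-up is that the inductive split into $F_1$ and $F_{\bar 1}$ needs care about what statement you induct on, since $F_1$ is $(k-1)$-uniform on $n-1$ vertices but its matching number need not drop, and $F_{\bar 1}$ keeps uniformity $k$ but loses a vertex rather than dropping $s$; making both branches close simultaneously is exactly the known obstruction, as you say.
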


For $k=3$ this conjecture was verified by Frankl~\cite{Frankltwo}. For the case $k= 4$, Conjecture~\ref{Erdos main conjecture} was verified asymptotically by Alon, Frankl, Huang, R\"odl, Ruci\'nski and Sudakov~\cite{Large matchings}. Recently, Frankl confirmed the conjecture exactly for $s\leq n/2k$, i.e. when the aim is to cover at most half of the vertices of the hypergraph.
\begin{theorem}\label{Frankl theorem}\cite{Frankl}
Let $n$, $k$, $s\in \mathbb{N}$ be such that $n$, $k\geq 2$ and $n\geq (2s-1)k-s+1$. Then
$$m_0^s(k,n)=\binom{n}{k}-\binom{n-s+1}{k}+1.$$
\end{theorem}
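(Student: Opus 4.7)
The lower bound $m_0^s(k,n) \geq \binom{n}{k} - \binom{n-s+1}{k} + 1$ is given by the hypergraph $H(s)$ already described in the introduction: the family of all $k$-subsets of $[n]$ meeting a fixed $(s-1)$-set $T$ has exactly $\binom{n}{k} - \binom{n-s+1}{k}$ edges, and any matching must use a distinct vertex of $T$ per edge, so its matching number is at most $s-1$. The plan is therefore to prove the matching upper bound: every $k$-uniform hypergraph $G$ on $n \geq (2s-1)k - s + 1$ vertices with $e(G) > \binom{n}{k} - \binom{n-s+1}{k}$ contains a matching of size $s$.

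The first step is to apply the Frankl shifting technique. Each shift $S_{ij}$ with $i<j$ replaces an edge $e\ni j$, $i\notin e$ by $(e\setminus\{j\})\cup\{i\}$ whenever the new set is not already an edge; this operation preserves $e(G)$ and does not increase $\nu(G)$. Iterating the shifts until the family is stable reduces us to the case where $G$ is \emph{shifted}, and it then suffices to prove the structural claim that if $G$ is shifted, $k$-uniform on $[n]$, with $n \ge (2s-1)k - s + 1$ and $\nu(G) \le s - 1$, then every edge of $G$ meets $[s-1]$. Indeed this claim immediately forces $e(G) \le \binom{n}{k} - \binom{n-s+1}{k}$, since the right-hand side counts precisely the $k$-sets meeting $[s-1]$.

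The structural claim would be proved by induction on $s$. The base $s=1$ is vacuous, since $\nu(G)\le 0$ forces $G=\emptyset$. For the inductive step, suppose for contradiction that some edge of $G$ avoids $[s-1]$. By componentwise dominance (a standard consequence of shiftedness) the edge $e_0 := \{s, s+1,\ldots,s+k-1\}$ then lies in $G$, and the goal is to exhibit a matching of size $s$ in $G$. The natural strategy is to chain the inductive hypothesis through the link $L_1 = \{e\setminus\{1\} : 1\in e\in G\}$ (a shifted $(k-1)$-uniform family on $n-1$ vertices) and the vertex-deletion $G-1 = \{e\in G : 1\notin e\}$ (a shifted $k$-uniform family on $n-1$ vertices): if a matching of size $s$ is already available in $G-1$ we are done; otherwise $\nu(G-1)\le s-1$, and the inductive form of the claim forces structural restrictions on the edges of $G-1$ and $L_1$ that, combined with $e_0$ and a suitably chosen $L_1$-edge, assemble $s$ pairwise disjoint edges in $G$.

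The main obstacle, and the reason the hypothesis takes the sharp form $n\ge(2s-1)k-s+1$, is this matching construction. Shiftedness applied solely to $e_0$ produces only sets dominated componentwise by $e_0$, all of which share most of their coordinates with $\{s,\ldots,s+k-1\}$, so one cannot directly assemble a matching from shifts of $e_0$ alone. One must propagate the shifted structure through multiple link and deletion steps, and verify that each sub-problem still satisfies the numerical threshold for its uniformity and matching parameter; checking that $n\ge(2s-1)k-s+1$ is precisely what keeps this induction alive — so that the $(k-1)$-uniform link on $n-1$ vertices and the $k$-uniform deletion on $n-1$ vertices each inherit the correct threshold for the parameter $s-1$ or $s$ — is the delicate arithmetic at the heart of the argument.
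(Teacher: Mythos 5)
This theorem is not proved in the paper under review: it is quoted verbatim from Frankl's 2013 paper \cite{Frankl}, so there is no internal proof to compare against. Assessing your sketch on its own terms, there is a genuine gap.

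The structural claim you reduce to --- that a shifted $k$-uniform family $G$ on $[n]$ with $n \geq (2s-1)k - s + 1$ and $\nu(G) \leq s-1$ must have \emph{every} edge meeting $[s-1]$ --- is false. Take $k = 2$, $s = 2$, $n = 5$, and let $G$ be the triangle $\{\{1,2\}, \{1,3\}, \{2,3\}\}$ on vertex set $[5]$. This family is shifted (every shift $S_{ij}$ sends each edge to another edge of the triangle), satisfies $\nu(G) = 1 = s-1$, and $n = 5$ meets the threshold $(2s-1)k - s + 1 = 5$ with equality; yet the edge $\{2,3\}$ avoids $[s-1] = \{1\}$. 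Frankl's \emph{conclusion} is of course respected, since $e(G) = 3 \leq 4 = \binom{5}{2} - \binom{4}{2}$, but your intermediate claim that shiftedness plus small matching number forces containment in the star-like family $H(s)$ is simply not true. Because the claim is false, the induction scheme you outline through links $L_1$ and deletions $G - 1$ is aimed at a target that cannot be reached, regardless of how carefully the arithmetic threshold is propagated. What shiftedness together with $\nu(G) \leq s - 1$ actually gives is only a componentwise-domination fact: no edge can dominate $\{(s-1)k+1, \ldots, sk\}$, equivalently every edge $\{f_1 < \cdots < f_k\}$ has some $f_i < (s-1)k + i$. This is strictly weaker than ``every edge meets $[s-1]$,'' and Frankl's actual proof replaces your structural claim with a more delicate counting lemma involving the links of the first few vertices rather than a containment statement. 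To repair your proof you would need to abandon the structural claim and instead bound $e(G)$ directly via such a counting inequality.
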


\subsection{Large fractional matchings}

Our approach to proving our results uses the concepts of fractional matchings and fractional vertex covers. A \textit{fractional matching} in a $k$-uniform hypergraph $G=(V,E)$ is a function $w: E \rightarrow [0,1]$ of weights of edges, such that for each $v\in V$ we have $\sum_{e\in E: v\in e} w(e)\leq 1$. The \textit{size} of $w$ is $\sum_{e\in E} w(e)$. We say $w$ is \emph{perfect} if it has size $|V|/k$. A \textit{fractional vertex cover} in $G$ is a function $w: V \rightarrow [0,1]$ of weights of vertices, such that for each $e\in E$ we have $\sum_{v\in e} w(v)\geq 1$. The \textit{size} of $w$ is $\sum_{v\in V} w(v)$.

A key idea (already used e.g. in \cite{Large matchings, RRStwo}) is that we can switch between considering the largest fractional matching and the smallest fractional vertex cover of a hypergraph. The determination of these quantities are dual linear programming problems, and hence by the Duality Theorem they have the same size.

For $s\in \mathbb{R}$ we let $f_d^s(k,n)$ denote the minimum integer $m$ such that every $k$-uniform hypergraph $G$ on $n$ vertices with $\delta_d(G)\geq m$ has a fractional matching of size $s$. It was shown in \cite{RRStwo} that $f_{k-1}^{n/k}(k,n)=\lceil n/k\rceil$.

To prove Theorem~\ref{main theorem}, we use Theorem \ref{Frankl theorem}, along with methods similar to those developed in \cite{Large matchings}, to convert the edge-density conditions for the existence of matchings into corresponding minimum degree conditions for the existence of fractional matchings (see Proposition~\ref{main proposition}). We then use the Weak Hypergraph Regularity Lemma to prove Theorem~\ref{main theorem} by converting our fractional matchings into integer ones. Our argument also gives the following theorem which, for $1\leq d\leq k-2$, asymptotically determines $f_d^s(k,n)$ for fractional matchings of any size up to $n/2(k-d)$. Note that this determines $f_d^s(k,n)$ asymptotically for all $s\in (0,n/k)$ whenever $d\geq k/2$.
\begin{theorem}\label{minimum degree theorem}
Let $n$, $k\geq 3$, and $1\leq d\leq k-2$ be integers and let $0\leq a\leq \min\{1/2(k-d), 1/k\}$. Then
$$f^{an}_d(k,n)= \left(1-(1-a)^{k-d}+ o(1)\right)\binom{n-d}{k-d}.$$
\end{theorem}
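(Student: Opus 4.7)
The lower bound is provided by the construction $H(s)$ with $s = \lceil an \rceil$: fixing $S \subset V$ of size $s - 1$ and taking as edges all $k$-subsets of $V$ meeting $S$, a direct computation gives $\delta_d(H(s)) = \binom{n-d}{k-d} - \binom{n-d-s+1}{k-d} = (1 - (1-a)^{k-d} + o(1))\binom{n-d}{k-d}$, while the indicator $\mathbf{1}_S$ is a fractional vertex cover of $H(s)$ of size $s - 1 < an$, so by LP duality $\nu^*(H(s)) < an$. This gives $f_d^{an}(k,n) \geq (1 - (1-a)^{k-d} - o(1))\binom{n-d}{k-d}$.

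For the upper bound, suppose $\delta_d(G) \geq (1 - (1-a)^{k-d} + \eps)\binom{n-d}{k-d}$ and aim to show $\nu^*(G) \geq an$. By LP duality it suffices to prove that every fractional vertex cover $y \colon V \to [0,1]$ of $G$ satisfies $\sum_v y(v) \geq an$. Suppose, for contradiction, that some such $y$ has $c := \sum_v y(v) < an$. For any $d$-set $T \subset V$, each $(k-d)$-set $T' \subset V \setminus T$ with $y(T') < 1 - y(T)$ forces $T \cup T'$ to be a non-edge of $G$; letting
\[
\phi_y(T) := \bigl|\bigl\{T' \in \tbinom{V \setminus T}{k-d} : y(T') < 1 - y(T)\bigr\}\bigr|,
\]
we have $\deg_G(T) \leq \binom{n-d}{k-d} - \phi_y(T)$, so the minimum-degree hypothesis forces $\phi_y(T) \leq ((1-a)^{k-d} - \eps)\binom{n-d}{k-d}$ for every $T$. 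I would pick $T$ to consist of the $d$ vertices of smallest $y$-value, so that $y(T) \leq dc/n < da$; since $d \leq k-2$ and $a \leq 1/(2(k-d))$ force $da < 1/2$, the threshold $1 - y(T) > 1/2$ remains bounded away from $0$.

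The role of Frankl's Theorem~\ref{Frankl theorem} is to control each link: $L_T$ is a $(k-d)$-uniform hypergraph on $n-d$ vertices whose edge count $\deg_G(T)$ lies above Frankl's threshold for an integer matching of size $an$ (the applicability condition $an \leq (n-d)/(2(k-d))$ following from $a \leq 1/(2(k-d))$). The plan is to run a shifting/compression argument on the coordinates of $y$ that consolidates fractional weight, reducing the estimate of $\max_T \phi_y(T)$ to the extremal case $y = \mathbf{1}_S$ with $|S| \leq \lceil c \rceil < an$. In that extremal case, for any $T$ disjoint from $S$ (which exists since $|V \setminus S| \geq n - an > d$), we have $\phi_y(T) = \binom{n-d-|S|}{k-d} \geq ((1-a)^{k-d} - o(1))\binom{n-d}{k-d}$, contradicting the upper bound on $\phi_y(T)$ for $n$ sufficiently large relative to $\eps$.

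The main obstacle is establishing the desired monotonicity of this compression: transferring weight between two coordinates $y_u, y_v$ of $y$ can move $\phi_y(T)$ in either direction depending on whether $u, v$ lie inside or outside $T$ and on the distribution of $y$ over the remaining vertices, so a delicate case analysis or re-selection of $T$ after each shift is required. Alternatively, one may bypass the compression by directly constructing a fractional matching $w(e) = \alpha \sum_{T \in \binom{e}{d}} \mathbf{1}[e \setminus T \in M_T]$ averaged over the integer matchings $M_T$ supplied by Frankl's theorem in each link; there the key technical difficulty becomes calibrating $\alpha$ so that the vertex-saturation constraints $\sum_{e \ni v} w(e) \leq 1$ hold pointwise rather than merely on average, since the naive uniform choice loses a factor of $1/(1+ad)$ in the size of the resulting fractional matching.
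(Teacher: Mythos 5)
Your setup is right in outline but has a genuine gap at the crucial step. The lower bound via $H(s)$ and LP duality is correct and matches the paper's (\ref{fractional matching general lower bound}). For the upper bound you also correctly identify the two key tools (duality, Frankl's Theorem~\ref{Frankl theorem} applied in links) and the right move of focusing on the $d$-tuple $L$ of vertices of smallest weight. Where you get stuck --- and you are honest that you are stuck --- is in turning the fractional vertex cover $y$ of $G$ of size less than $an$ into a usable bound on $\deg_G(L)$. Both alternatives you sketch (a compression argument to lower-bound $\phi_y(T)$, or averaging integer matchings from each link with a global scalar $\alpha$) are indeed problematic, for exactly the reasons you name: compression is not obviously monotone in $\phi_y$, and the na\"ive averaging gives only an on-average saturation constraint.

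The paper sidesteps both difficulties with a normalization trick you are missing. Rather than count non-edges of the link directly, one shows that the link $H_w(L)$ of a small-weight $d$-tuple $L$ itself has a small fractional vertex cover, hence (by duality again) a small fractional matching number, hence few edges. Concretely, first pass from the fractional vertex cover $w$ (of size less than $an$) to the hypergraph $H_w$ of \emph{all} $k$-sets $e$ with $\sum_{v\in e}w(v)\geq 1$, which contains $H$ and has the same cover. Take $L$ to be a $d$-subset of the $\lfloor \varepsilon^{1/d}n\rfloor +d$ smallest-weight vertices (averaging so all of $L$ has common weight $w(L)<1/k$) and define on $V\setminus L$
\[
w'(v):=\min\left\{\max\left\{0,\ \frac{w(v)-w(L)}{1-kw(L)}\right\},\ 1\right\}.
\]
A short computation shows $\sum_{v}w'(v)<an$ (this is where the near-minimality of the weights on $L$ and the tiny error from vertices with $w(v)<w(L)$ enter), while for every $e\in E(H_w)$ containing $L$ one has $\sum_{v\in e\setminus L}w'(v)\geq 1$ because $\sum_{v\in e}w(v)\geq 1$. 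So $w'$ is a fractional vertex cover of $H_w(L)$ of size less than $an$, hence $\deg_H(L)\leq e(H_w(L))<f_0^{an}(k-d,n-d)$, and then Theorem~\ref{Frankl theorem} (via Proposition~\ref{smooth proposition}) bounds $f_0^{an}(k-d,n-d)\leq (1-(1-a)^{k-d}+o(1))\binom{n-d}{k-d}$, contradicting the degree hypothesis. This is precisely Proposition~\ref{main proposition} in the paper, and replaces both your proposed workarounds with one clean renormalisation; without it, neither of your alternatives closes.
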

We prove Theorem \ref{minimum degree theorem actual perfect} in a similar fashion, via the following two theorems.
\begin{theorem}\label{edge-density perfect theorem}
Let $n$, $k\geq 2$, $d\geq 1$ be integers. Then
$$f_0^{n/(k+d)}(k,n)\leq \left(\frac{k}{k+d}-\frac{k-1}{(k+d)^{k}} +o(1)\right)\binom{n}{k}.$$
\end{theorem}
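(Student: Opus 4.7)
The plan is to apply LP duality: by the Duality Theorem for fractional matchings and vertex covers, the statement is equivalent to showing that any $k$-uniform hypergraph $G$ on $n$ vertices which admits a fractional vertex cover $w \colon V(G) \to [0,1]$ of total weight $c := \sum_v w(v) < n/(k+d) =: s$ satisfies
$$|E(G)| \leq \left(\frac{k}{k+d} - \frac{k-1}{(k+d)^k} + o(1)\right)\binom{n}{k}.$$
The first step is the elementary LP bound: summing $\sum_{v \in e} w(v) \geq 1$ over all $e \in E(G)$ yields $|E(G)| \leq \sum_v w(v) \deg_G(v) \leq c\binom{n-1}{k-1}$, which combined with $c < s$ gives $|E(G)| < \frac{k}{k+d}\binom{n}{k}$. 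This recovers the leading term but leaves a gap of $\frac{k-1}{(k+d)^k}\binom{n}{k}$.

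To close this gap I would exploit structural consequences of the cover constraint. Let $H := \{v \in V : w(v) \geq 1/k\}$ and $L := V \setminus H$. Two observations are crucial: every edge of $G$ must meet $H$ (otherwise its cover-weight would be strictly less than $k \cdot 1/k = 1$), and $|H| \leq kc < ks$ since each heavy vertex has weight at least $1/k$. The correction should then come from comparing the LP bound $c\binom{n-1}{k-1}$ with the slack $\sum_{e \in E(G)}(\beta_e - 1) \geq 0$, where $\beta_e := \sum_{v \in e} w(v)$. Edges supported predominantly in $H$ tend to have $\beta_e$ strictly exceeding $1$, and a careful count of this slack produces a quantity on the order of $(k-1)\binom{s}{k} \approx \frac{k-1}{(k+d)^k}\binom{n}{k}$, which matches the desired correction. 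An identity such as
$$s\binom{n-1}{k-1} - \left(\binom{n}{k} - \binom{n-s}{k}\right) = \sum_{j=2}^{k}(j-1)\binom{s}{j}\binom{n-s}{k-j},$$
whose $j = k$ term equals $(k-1)\binom{s}{k}$, indicates where the improvement is to be located.

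The main obstacle is extracting this slack quantitatively. I anticipate the argument involves a case split on the size of $|H|$. When $|H|$ is well below its maximum $ks$, the structural bound $|E(G)| \leq \binom{n}{k} - \binom{|L|}{k}$ already suffices; when $|H|$ is close to $ks$, the weights in $H$ must be concentrated near $1/k$ and most edges must lie within $H$, so the edges entirely inside $H$ contribute significant slack to the LP bound. Optimizing over $|H|$ in the limit $n \to \infty$ should yield the stated upper bound, with the $o(1)$ error absorbing losses from the continuous approximation of binomial coefficients and from boundary weight configurations.
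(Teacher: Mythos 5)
Your opening move (LP duality, the trivial bound $|E(G)|\leq c\binom{n-1}{k-1}$ giving the leading term $\tfrac{k}{k+d}\binom{n}{k}$) agrees with the paper, but the mechanism you propose for extracting the correction is both different from the paper's and, as written, has a real gap.

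The central problem is your claim that ``edges supported predominantly in $H$ tend to have $\beta_e$ strictly exceeding $1$.'' This is false in general: a cover can assign weight exactly $1/k$ to every vertex of $H:=\{v:w(v)\geq 1/k\}$ and weight zero elsewhere. Then every edge inside $H$ has $\beta_e=1$, so there is no slack at all, while $|H|$ can be as large as $kc$. In that regime the bound must come from the crude fact that $|E|\leq\binom{|H|}{k}$ plus the near-absence of edges leaving $H$, not from LP slack. Your two extreme cases ($|H|$ near $s$, where $|E|\leq\binom{n}{k}-\binom{n-|H|}{k}$ suffices, and $|H|$ near $ks$, where you would like slack) leave a wide middle range uncovered. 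For instance with $k=3$, $d=1$, the trivial structural bound only works for $|H|/n\lesssim 0.35$, yet $|H|/n$ can go up to $0.75$; nothing in your sketch handles $|H|/n\in(0.35,0.75)$. A serious version of your program would have to optimize the number of edges over all $|H|$ and all weight distributions on $H$, which is exactly the hard part and which you have not supplied. Your identity for $s\binom{n-1}{k-1}-\bigl(\binom{n}{k}-\binom{n-s}{k}\bigr)$ also overshoots: it measures the slack for a $0$--$1$ cover supported on exactly $s$ vertices, and that total slack is much larger than $(k-1)\binom{s}{k}$; isolating the $j=k$ term as ``the'' correction is not justified.

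The paper's route is genuinely different. It proceeds by induction on $k$, with the Erd\H{o}s--Gallai base case $k=2$. The key auxiliary result is Lemma~\ref{high degree vertices lemma}: it uses the inductive bound on $f_0^{\,\cdot}(k-1,\cdot)$, applied to the link hypergraph of a carefully chosen vertex, to show that any $k$-uniform hypergraph with sufficiently many vertices of degree above a threshold $D$ already has a fractional matching of size $an$. Contrapositively, if $G$ has no such fractional matching, only $<n/(k+d)$ vertices have degree above $(\xi+\varepsilon/2)\binom{n-1}{k-1}$. This is the dichotomy that controls the middle regime your sketch cannot handle. The paper then constructs, via Baranyai's theorem (Proposition~\ref{Baranyai Corollary}), a subfamily $E''$ of $k$-tuples in which every high-degree vertex is hit exactly $\lfloor\xi\binom{n-1}{k-1}\rfloor$ times, and feeds this into the weighted count of Proposition~\ref{sums minimums vertex covers proposition} to bound $e(G)$. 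You would be much closer to a proof if you replaced the slack heuristic with a degree-dichotomy argument of this kind; without it, the middle range of $|H|$ remains a genuine gap.
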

\begin{theorem}\label{minimum degree theorem perfect}
Let $n$, $k\geq 3$, $1\leq d\leq k-2$ be integers. Then
$$f_d^{n/k}(k,n)\leq \left(\frac{k-d}{k}-\frac{k-d-1}{k^{k-d}} +o(1)\right)\binom{n-d}{k-d}.$$
\end{theorem}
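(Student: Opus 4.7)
The plan is to pass via link hypergraphs: the minimum $d$-degree hypothesis is calibrated exactly so that Theorem~\ref{edge-density perfect theorem} applies to every link, and then to convert the resulting link fractional matchings into a perfect fractional matching of $G$ by LP duality.

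Concretely, set $c := \frac{k-d}{k} - \frac{k-d-1}{k^{k-d}}$ and assume $\delta_d(G) \geq (c + \eta)\binom{n-d}{k-d}$ for some small fixed $\eta > 0$. For each $d$-set $S \subseteq V(G)$, the link $L_G(S) := \{e \setminus S : e \in E(G),\, S \subseteq e\}$ is a $(k-d)$-uniform hypergraph on $n-d$ vertices with $\deg_G(S) \geq (c + \eta)\binom{n-d}{k-d}$ edges. The edge-density threshold in Theorem~\ref{edge-density perfect theorem}, under the substitution $k \mapsto k-d$, $d \mapsto d$, $n \mapsto n-d$, equals exactly $c\binom{n-d}{k-d}$, so each $L_G(S)$ contains a fractional matching $w_S$ of size $(n-d)/k$.

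Now I would argue by LP duality. Let $w : V \to [0,1]$ be any fractional vertex cover of $G$, and for each $d$-set $S$ define the lift $\tilde{w}_S(e) := w_S(e \setminus S)$ if $S \subseteq e$ and $\tilde{w}_S(e) := 0$ otherwise. Multiplying the cover constraint $\sum_{v \in e} w(v) \geq 1$ by $w_S(e \setminus S)$ and summing over edges $e \supseteq S$ gives, for every $S$,
\[
    \sum_v w(v) \sum_{e \ni v} \tilde{w}_S(e) \;\geq\; \frac{n-d}{k}.
\]
Summing over the $\binom{n}{d}$ choices of $S$ yields $\sum_v w(v)\,\Phi(v) \geq \binom{n}{d}\cdot\frac{n-d}{k}$, where $\Phi(v) := \binom{n-1}{d-1}\cdot\frac{n-d}{k} + \sum_{S \not\ni v} \beta_{v,S}$ and $\beta_{v,S} := \sum_{e' \in L_G(S),\, v \in e'} w_S(e') \leq 1$. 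Using the identity $\binom{n}{d}(n-d)/k = (n/k)\binom{n-1}{d}$, the target conclusion $\sum_v w(v) \geq n/k$ would follow from the uniform upper bound $\Phi(v) \leq \binom{n-1}{d}$ for every $v$.

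The main obstacle is precisely this balancing step. By double counting $\sum_v \beta_{v,S} = (k-d)(n-d)/k$ for each $S$, and so the average value of $\Phi(v)$ over $v$ is exactly $\binom{n-1}{d}$; but for a carelessly chosen family of link matchings the maximum of $\Phi$ can exceed its average by a factor of up to $(k+d)/k$, yielding only the much weaker bound $\sum_v w(v) \geq n/(k+d)$. To close the gap one must exploit the freedom in choosing $w_S$ to enforce $\Phi(v) \leq \binom{n-1}{d}$ for all $v$ simultaneously—for instance by a symmetrization over the $d$-sets, or by extracting the family $\{w_S\}$ jointly from an auxiliary LP that encodes the required load-balancing condition. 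This coherent choice of link matchings is the main technical ingredient and plays the same role here that Proposition~\ref{main proposition} plays in the proof of Theorem~\ref{minimum degree theorem}. Once it is in place, LP duality immediately gives that every fractional vertex cover of $G$ has size at least $n/k$, so $G$ admits a perfect fractional matching, as required.
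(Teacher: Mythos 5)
Your proposal has a genuine gap, which you yourself flag: the load-balancing claim $\Phi(v)\le\binom{n-1}{d}$ for \emph{every} $v$ is asserted to be achievable by ``symmetrization'' or ``an auxiliary LP'' but is not proved, and it is by no means clear that it can always be arranged. Everything before that point is sound (the lift $\tilde w_S$, the double-count $\sum_v\beta_{v,S}=(k-d)(n-d)/k$, the observation that the \emph{average} of $\Phi$ equals $\binom{n-1}{d}$), but an averaging identity plus $\beta_{v,S}\le 1$ only gives $\sum_v w(v)\ge n/(k+d)$, as you note. The step from ``average is right'' to ``max is right'' is exactly where the real work would have to happen, and without it your argument does not establish the theorem.

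The paper avoids this difficulty entirely by running the LP-duality argument in the opposite direction, encapsulated in Proposition~\ref{main proposition}. Rather than taking a fractional matching in \emph{each} link and trying to aggregate them coherently into a lower bound on every fractional vertex cover of $G$, the paper starts from a hypothetical small fractional vertex cover $w$ of $G$, selects the \emph{single} $d$-set $L$ consisting of the lowest-weight vertices (after replacing $H$ by the threshold hypergraph $H_w$ and averaging the weights on $L$), and rescales $w$ via $w^*(v)=(w(v)-w(L))/(1-kw(L))$ to produce a fractional vertex cover of the link $H_w(L)$ of size still below $an$. By duality this shows $\deg_H(L)<f_0^{an}(k-d,n-d)$, contradicting the degree hypothesis. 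Because only one well-chosen $L$ is needed, no balancing across all $d$-sets is required. Theorem~\ref{minimum degree theorem perfect} is then immediate: apply Theorem~\ref{edge-density perfect theorem} and Proposition~\ref{smooth proposition} to bound $f_0^{n/k}(k-d,n-d)$ by $\bigl(\tfrac{k-d}{k}-\tfrac{k-d-1}{k^{k-d}}+o(1)\bigr)\binom{n-d}{k-d}$, and feed this into Proposition~\ref{main proposition} with $\varepsilon=0$. Your instinct that the missing piece ``plays the same role as Proposition~\ref{main proposition}'' is correct; the content of that proposition is precisely the mechanism that replaces your unproved balancing step, and it is not a cosmetic variant but a different argument.

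Two minor remarks. First, you ask the link to carry a matching of size $(n-d)/k$, while Proposition~\ref{main proposition} is phrased in terms of $f_0^{an}(k-d,n-d)$ with $an=n/k$; the paper reconciles these via Proposition~\ref{smooth proposition} (moving from $(n-d)/k$ to $n/k$ costs only $o(1)\binom{n-d}{k-d}$ in the density). In your forward argument the size $(n-d)/k$ is actually the right target, since $\binom{n}{d}(n-d)/\binom{n-1}{d}=n$; that part is fine. Second, if you wanted to push your approach through, the natural formalization of your ``auxiliary LP'' would be a max-flow/LP feasibility statement whose dual is, once again, essentially the cover-rescaling step of Proposition~\ref{main proposition} -- so you would likely end up reproving that proposition in disguise rather than genuinely circumventing it.
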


The rest of the paper is organised as follows. In Section \ref{Notation, tools and preliminary results} we lay out some notation, set out some useful tools, and prove some preliminary results. Section~\ref{Minimum edge-density conditions for fractional matchings} is the heart of the paper, in which we prove Theorem \ref{edge-density perfect theorem}. In Section \ref{Minimum vertex degree conditions for fractional matchings} we derive Theorems \ref{minimum degree theorem} and \ref{minimum degree theorem perfect}, and in Section \ref{Constructing integer matchings from fractional ones} we derive Theorems \ref{minimum degree theorem actual perfect} and \ref{main theorem}. In Section~\ref{concluding remarks} we give some concluding remarks.

\section{Notation, tools and preliminary results}\label{Notation, tools and preliminary results}

\subsection{Notation}
Since in many of the proofs in this paper we often consider vertex degrees, when $S=\{v\}$ is a set containing only one vertex we write $d_G(v)$ to denote $deg_G(S)$ and we refer to $d_G(v)$ as the degree of $v$ (in $G$). We let $e(G)$ denote the number of edges in a hypergraph $G$, and let $|G|$ denote the number of its vertices. For a set $V$ and a positive integer $k$ we let $\binom{V}{k}$ denote the set of all $k$-element subsets of $V$. For $m\in \mathbb{N}$ we let $[m]$ denote the set $\{1,\dots, m\}$. Whenever we refer to a $k$-tuple, we assume that it is unordered. Given a hypergraph $G=(V,E)$ and a set $S\subseteq V$, we refer to the pair $(V\backslash S, \{e\subseteq V: S\cap e=\emptyset,\hspace{1mm} e\cup S\in E\})$ as the \textit{neighbourhood hypergraph} of $S$ (in $G$). If $S=\{v\}$ has just one element then we may refer to this pair as the neighbourhood hypergraph of $v$. For $U\subseteq V$ we denote by $G[U]$ the hypergraph induced by $U$ on $G$, that is the hypergraph with vertex set $U$ and edge set $\{e\in E: e\subseteq U\}$.

\subsection{Tools and preliminary results}
In proving some of our results we will use the lower bound given by the earlier construction $H(s)$, for all integers $n$, $d$, $k$, $s$ with $k\geq 2$ and $0\leq d\leq k-1$ and $0\leq s\leq n/k$:
\begin{equation}\label{fractional matching general lower bound}
m_d^s(k,n)\geq f_d^s(k,n)\geq \left(1-(1-s/n)^{k-d}+o(1)\right)\binom{n-d}{k-d}.
\end{equation}

Now, as mentioned in Section \ref{Introduction}, a key tool in this paper is that the determination of the size of the largest fractional matching of a $k$-uniform hypergraph is a linear programming problem, and its dual problem is to determine the size of the smallest fractional vertex cover of the hypergraph. The following proposition, which follows by the Duality Theorem, will be very useful to us.
\begin{proposition}\label{duality}
Let $k\geq 2$ and let $G$ be a $k$-uniform hypergraph. The size of the largest fractional matching of $G$ is equal to the size of the smallest fractional vertex cover of $G$.
\end{proposition}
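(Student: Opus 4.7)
The plan is to recognise that both the maximum fractional matching problem and the minimum fractional vertex cover problem are linear programs, and that they form a primal-dual pair to which the Strong Duality Theorem applies directly. The statement is then an immediate consequence, modulo checking that the two extra boundedness constraints $w(e) \le 1$ and $w(v) \le 1$ built into the definitions are without loss of generality.

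More concretely, first I would write the fractional matching problem as the linear program
\[
\max \sum_{e \in E} w(e) \quad \text{subject to} \quad \sum_{e\ni v} w(e) \le 1 \text{ for all } v\in V,\ w(e)\ge 0 \text{ for all } e\in E.
\]
The upper bound $w(e)\le 1$ may be dropped from the definition because, since $k\ge 2$, every edge $e$ contains at least one vertex $v$, and the constraint at $v$ forces $w(e)\le \sum_{e'\ni v} w(e')\le 1$ automatically. Computing the LP dual (each variable $x(v)$ corresponds to a primal constraint at $v$) yields
\[
\min \sum_{v\in V} x(v) \quad \text{subject to} \quad \sum_{v\in e} x(v)\ge 1 \text{ for all } e\in E,\ x(v)\ge 0 \text{ for all } v\in V.
\]
This is precisely the fractional vertex cover problem, again after observing that the constraint $x(v)\le 1$ in the definition is without loss of generality: if some feasible $x$ has $x(v)>1$, truncating it to $x(v):=1$ preserves every edge-sum constraint (since every edge containing $v$ is still covered by the single contribution from $v$) and weakly decreases the objective.

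Finally, I would invoke the Strong Duality Theorem of linear programming. Both LPs are feasible ($w\equiv 0$ for the primal, $x\equiv 1$ for the dual) and the primal is bounded above by $|V|/k$ (summing the primal constraints over $v$ and dividing by $k$), so both LPs attain their optima and the optimal values coincide. This is exactly the equality claimed in the proposition.

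There is no serious obstacle here: the proof is a routine reduction to LP duality. The only minor points requiring care are the two observations that the $[0,1]$ boundedness conditions in the definitions do not actually tighten the LPs, so that the clean primal-dual pair above genuinely computes the maximum fractional matching and the minimum fractional vertex cover as defined in the paper.
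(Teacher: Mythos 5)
Your proof is correct and takes the same route as the paper, which simply cites the LP Duality Theorem without further elaboration. You fill in the routine details the paper omits — writing out the primal-dual pair and checking that the $[0,1]$ upper bounds in the definitions are without loss of generality — but there is no substantive difference in approach.
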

In the rest of this section we collect some preliminary results.
\begin{proposition}\label{sums minimums vertex covers proposition}
Let $G=(V,E)$ be a hypergraph, $E'\subseteq E$, $S\subseteq V$, and let $w$ be a fractional vertex cover of $G$. Then
$$e(G)\leq \sum\limits_{e\in E} \sum\limits_{v\in e\backslash S} w(v)+ \sum\limits_{e\in E'} \sum\limits_{v\in e\cap S} w(v)+ |E\backslash E'|.$$
\end{proposition}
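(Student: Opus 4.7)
The plan is to apply the fractional vertex cover inequality edge-by-edge, but with two different accounting schemes depending on whether the edge lies in $E'$ or in $E\setminus E'$. Summing the resulting $|E|$ edge-inequalities will give precisely the stated bound, since $e(G)=|E|$.

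In detail, for each $e\in E'$ I would use the fractional cover condition in full, after splitting the sum according to whether vertices lie in $S$ or not:
$$1 \;\leq\; \sum_{v\in e} w(v) \;=\; \sum_{v\in e\setminus S} w(v) + \sum_{v\in e\cap S} w(v).$$
For each $e\in E\setminus E'$, I would instead discard the $S$-part entirely and replace it by $1$, which is valid simply because $w\geq 0$:
$$1 \;\leq\; \sum_{v\in e\setminus S} w(v) + 1.$$
Adding these $|E|$ inequalities, the $(e\setminus S)$-contributions combine over all $e\in E$ into $\sum_{e\in E}\sum_{v\in e\setminus S} w(v)$, the $(e\cap S)$-contributions appear only for $e\in E'$ and yield $\sum_{e\in E'}\sum_{v\in e\cap S} w(v)$, and the constants contribute $|E\setminus E'|$. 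This matches the right-hand side exactly.

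I do not foresee any genuine obstacle: the statement is a short book-keeping lemma whose only ``idea'' is recognising that on edges in $E\setminus E'$ one may forgo the cover inequality in favour of the trivial bound $1\leq 1$, trading the potentially large quantity $\sum_{v\in e\cap S} w(v)$ for the constant $1$. No hypergraph structure beyond the definition of a fractional vertex cover (in particular, $w\geq 0$ and $\sum_{v\in e} w(v)\geq 1$) is needed.
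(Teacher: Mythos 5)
Your proof is correct and is essentially the paper's argument: the paper writes $e(G)=|E'|+|E\setminus E'|$, bounds $|E'|\leq\sum_{e\in E'}\sum_{v\in e}w(v)$ by the cover condition, splits each inner sum over $e\setminus S$ and $e\cap S$, and then enlarges the range of the $e\setminus S$ sum from $E'$ to $E$ using $w\geq 0$; your edge-by-edge bookkeeping is the same calculation with the nonnegativity step distributed across $E\setminus E'$ rather than applied at the end.
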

\begin{proof}
As $w$ is a fractional vertex cover of $G$,
\begin{align*}
e(G)&= |E'|+|E\backslash E'| \leq \sum\limits_{e\in E'} \sum\limits_{v\in e} w(v)+ |E\backslash E'|
&\leq \sum\limits_{e\in E} \sum\limits_{v\in e\backslash S} w(v)+ \sum\limits_{e\in E'} \sum\limits_{v\in e\cap S} w(v)+ |E\backslash E'|.
\end{align*}
\end{proof}
The following crude bound will sometimes be useful. The proof is immediate from the definitions.
\begin{proposition}\label{smooth proposition}
Suppose that $k\geq 2$ and $0<a,c<1$ are fixed. Then for every $\varepsilon >0$ there exists $n_0=n_0(k,\varepsilon)$ such that if $n\geq n_0$ and $f_0^{an}(k,n)\leq c\binom{n}{k}$ then\COMMENT{COMMENT: Proof of Proposition \ref{smooth proposition}:
Suppose $G$ is a $k$-uniform hypergraph on $n$ vertices with $e(G)\geq (c+\varepsilon)\binom{n}{k}$. Choose an arbitrary hyperedge $e\in E(G)$, and delete all edges incident to any $v\in e$, to form the new hypergraph $G'$. Then $$e(G')\geq e(G)-k\binom{n-1}{k-1}\geq (c+\varepsilon)\binom{n}{k}-k\binom{n-1}{k-1}\geq c\binom{n}{k},$$
where the last inequality holds as $n_0$ is sufficiently large. So by assumption, $G'$ has a fractional matching $M$ of size $an$. Note then that $M\cup \{e\}$ is a fractional matching of $G$. So indeed $G$ has a fractional matching of size $an+1$, as required.} $f_0^{an+1}(k,n)\leq (c+\varepsilon)\binom{n}{k}$.
\end{proposition}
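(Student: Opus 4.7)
My plan is to take any $k$-uniform hypergraph $G$ on $n$ vertices with $e(G) \geq (c+\varepsilon)\binom{n}{k}$, extract a fractional matching of size $an$ from a slightly trimmed sub-hypergraph, and then add a single integer edge with weight $1$ to boost the size to $an+1$.

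Concretely, I would fix an arbitrary edge $e_0\in E(G)$ and form the hypergraph $G'$ on the same vertex set $V(G)$ obtained from $G$ by deleting every edge that meets $e_0$. Since the number of edges of $G$ meeting $e_0$ is at most $k\binom{n-1}{k-1}$, we have
\[
e(G') \;\geq\; (c+\varepsilon)\binom{n}{k} - k\binom{n-1}{k-1} \;\geq\; c\binom{n}{k},
\]
provided $n\geq n_0(k,\varepsilon)$ is large enough that $k\binom{n-1}{k-1}\leq \varepsilon\binom{n}{k}$; this inequality is what pins down $n_0$. By the assumed bound $f_0^{an}(k,n)\leq c\binom{n}{k}$ applied to $G'$, there exists a fractional matching $w'$ of $G'$ of size $an$. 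Now extend $w'$ to a function $w$ on $E(G)$ by setting $w(e_0)=1$ and $w(e')=w'(e')$ for $e'\in E(G')$ (and $w(e')=0$ for the deleted edges). Since every edge of $G'$ is vertex-disjoint from $e_0$, at each vertex $v\in e_0$ the total $w$-weight equals $1$, while at every $v\notin e_0$ the $w$-weight equals the $w'$-weight, which is at most $1$. Hence $w$ is a fractional matching of $G$ of size $an+1$, as required.

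The argument is essentially a one-line trimming-and-extension argument, so there is no serious obstacle; the only quantitative point is the choice of $n_0$ ensuring that the $O(n^{k-1})$ edges destroyed when removing those incident to $e_0$ are absorbed by the $\varepsilon\binom{n}{k}=\Theta(n^k)$ slack in the edge count. One minor thing to double-check is that the statement is non-vacuous, which it is, as the hypothesis is empty once $an>n/k$; otherwise $an+1\leq n/k$ for $n$ large enough, so a fractional matching of this size is admissible in an $n$-vertex $k$-uniform hypergraph.
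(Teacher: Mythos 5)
Your proof is correct and follows essentially the same argument as the paper's: delete all edges meeting an arbitrary edge $e_0$, observe that the slack $\varepsilon\binom{n}{k}$ absorbs the at most $k\binom{n-1}{k-1}=O(n^{k-1})$ destroyed edges for $n\geq n_0(k,\varepsilon)$, invoke the hypothesis on $G'$ to obtain a fractional matching of size $an$, and then add $e_0$ with weight $1$.
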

In the next section we will prove Theorem \ref{edge-density perfect theorem} by induction. For this we will need Theorem~\ref{base step}, which will establish the base case of this induction. Theorem~\ref{base step} is an easy consequence of the Erd\H{o}s-Gallai Theorem from \cite{EG}.
\begin{theorem}\label{base step}
For $k= 2$ and $x\leq 1/3$ we have
$$f^{xn}_0(k,n)= \left(1-(1-x)^{k}+o(1)\right)\binom{n}{k}.$$
\end{theorem}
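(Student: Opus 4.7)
The plan is to prove the claimed asymptotic by establishing matching lower and upper bounds. The lower bound is immediate from~\eqref{fractional matching general lower bound} applied with $d=0$ and $k=2$: the construction $H(\lceil xn\rceil)$, which for $k=2$ is the ``book'' consisting of all edges meeting a fixed vertex set $B$ of size $\lceil xn\rceil - 1$, has $\binom{n}{2} - \binom{n-\lceil xn\rceil+1}{2} = (1-(1-x)^2+o(1))\binom{n}{2}$ edges; since $B$ is a fractional vertex cover of size $\lceil xn\rceil - 1$, Proposition~\ref{duality} shows that its fractional matching number is strictly less than $xn$.

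For the upper bound, the key observation is that in a graph every integer matching is automatically a fractional matching of the same size, so it is enough to show that any $G$ on $n$ vertices with $e(G) \geq (1-(1-x)^2 + \varepsilon)\binom{n}{2}$ contains an integer matching of size $\lceil xn\rceil$. First I would invoke the classical Erd\H{o}s--Gallai theorem~\cite{EG}, which guarantees a matching of size $s$ in any graph with $e(G) > \max\bigl\{\binom{2s-1}{2},\, \binom{n}{2}-\binom{n-s+1}{2}\bigr\}$. Taking $s = \lceil xn\rceil$, the first term is asymptotically $(4x^2 + o(1))\binom{n}{2}$ and the second is asymptotically $(2x-x^2+o(1))\binom{n}{2}$, so the desired conclusion follows as soon as $4x^2 \leq 2x - x^2$, i.e.\ $x \leq 2/5$, which is comfortably guaranteed by the hypothesis $x \leq 1/3$.

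No significant obstacle is anticipated: the argument is a direct corollary of Erd\H{o}s--Gallai combined with the trivial implication ``integer matching $\Rightarrow$ fractional matching''. The hypothesis $x \leq 1/3$ appears precisely as the regime in which the book-type extremal graph $H(s)$ dominates the clique-type extremal graph $K_{2s-1}$, so that the Erd\H{o}s--Gallai upper bound matches the $H(s)$ lower bound and both yield the same leading constant $1-(1-x)^2$.
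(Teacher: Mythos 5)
Your proof is correct and matches the paper's approach exactly: the paper simply remarks that Theorem~\ref{base step} is an easy consequence of the Erd\H{o}s--Gallai theorem, and your argument spells out precisely why, with the lower bound coming from $H(\lceil xn\rceil)$ via duality and the upper bound from the observation that an integer matching is a fractional one. One small quibble: the crossover between the clique-type bound $\binom{2s-1}{2}$ and the book-type bound $\binom{n}{2}-\binom{n-s+1}{2}$ actually occurs at $x=2/5$ rather than $x=1/3$, so the hypothesis $x\leq 1/3$ is sufficient but not tight; the paper chooses $1/3$ simply because its only use of the base step (in the proof of Theorem~\ref{edge-density perfect theorem}) takes $x=1/(2+d)$ with $d\geq 1$.
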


The next proposition will also be needed in the proof of Theorem \ref{edge-density perfect theorem}. To prove this proposition we will need a well-known theorem of Baranyai~\cite{Baranyai} from 1975.
\begin{theorem}[Baranyai's Theorem]\label{Baranyai Theorem}
If $n\in \ell \mathbb{N}$ then the complete $\ell$-uniform hypergraph on $n$ vertices decomposes into edge-disjoint perfect matchings.
\end{theorem}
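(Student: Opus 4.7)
The plan is to follow Baranyai's original matrix-rounding proof. Write $m := n/\ell$ and $r := \binom{n-1}{\ell-1}$. Then $rm = \binom{n}{\ell}$, and each vertex of $[n]$ lies in exactly $r$ of the $\binom{n}{\ell}$ edges, so the desired decomposition corresponds to an assignment of each $\ell$-subset of $[n]$ to one of $r$ colour classes $M_1, \dots, M_r$ such that every vertex is covered in exactly one edge of each class. Counting confirms that the required sizes are consistent, so feasibility is purely a combinatorial matter.

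My approach is to construct this assignment by iterative rounding of a fractional one. Start from the uniform fractional assignment giving each edge weight $1/r$ in each class; this satisfies the marginal identities $\sum_i x_{e,i} = 1$ for every edge $e$ (each edge is used once across the classes) and $\sum_{e \ni v} x_{e,i} = 1$ for every vertex $v$ and class $i$ (each vertex is covered once per class). Process the edges one at a time: using the integrality of the transportation polytope, the current nonnegative rational matrix with integer row and column sums can be written as a convex combination of $0/1$ matrices with the same marginals, so one may replace it by such a matrix, thereby making one more column integer while preserving every marginal equality. Equivalently, as long as fractional entries remain, the bipartite support graph between fractional edges and vertex/class constraints contains an alternating cycle, and weight can be shifted around this cycle without introducing negative entries; after finitely many such shifts all entries become $0$ or $1$.

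The main obstacle is making sure that the rounding step preserves \emph{both} the column condition (each edge is assigned to a single class) and the vertex condition (each vertex appears in exactly one edge per class) simultaneously. The clever ingredient in Baranyai's argument is the right formulation of the auxiliary matrix: one chooses its rows and columns so that both families of constraints take the form of integer row/column sums of a nonnegative matrix, allowing a single application of transportation-polytope integrality to handle them together. Checking that this unified formulation indeed encodes the matching constraint, and that cycles always exist in the fractional-support graph until all entries are integer, is the heart of the proof; once carried out, the terminal $0/1$ matrix gives the required decomposition of $\binom{[n]}{\ell}$ into $r$ edge-disjoint perfect matchings.
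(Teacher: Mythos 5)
The paper does not contain a proof of this statement: Baranyai's theorem is a classical result from 1975 and is simply cited from \cite{Baranyai}, so there is no ``paper's own proof'' to compare against. I therefore assess your sketch on its own terms.

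Your plan is in the right family of ideas---Baranyai's argument does rest on integrality of transportation/flow polytopes---but as written it does not close. Regard $(x_{e,i})$ as a $\binom{n}{\ell}\times r$ matrix. Its row sums are $\sum_i x_{e,i}=1$ and its column sums are $\sum_e x_{e,i}=m$, the class sizes. The constraints you actually need, $\sum_{e\ni v}x_{e,i}=1$ for every vertex $v$ and class $i$, are \emph{partial} column sums, ranging only over the $\binom{n-1}{\ell-1}$ edges containing $v$, and are not marginals of this matrix; a $0/1$ point of the transportation polytope with the stated row/column marginals yields $r$ classes of $m$ edges each, but nothing forces any class to be a matching. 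Equivalently, each variable $x_{e,i}$ sits in the edge constraint for $e$ and in the $\ell$ vertex constraints $(v,i)$ for $v\in e$, so the constraint--variable incidence structure is not bipartite and the usual alternating-cycle rounding for transportation polytopes does not apply directly. You flag exactly this---``the clever ingredient is the right formulation of the auxiliary matrix''---but never supply it, and in fact no single $E\times I$ rounding carries the vertex conditions. Baranyai's actual proof handles them by induction on the ground set: at stage $j$ one records, for each class $i$ and each $T\subseteq[j]$ with $|T|\le\ell$, how many $\ell$-sets of that class have trace $T$ on $[j]$, maintains a near-regularity condition, and passes from $j$ to $j+1$ by one integral-flow (bipartite degree-sequence) step deciding which copies of which partial sets gain vertex $j+1$. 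This sequence of $n$ small integrality steps, rather than one global rounding, is precisely the step your sketch names as the heart of the proof and leaves out.
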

\begin{proposition}\label{Baranyai Corollary}
Let $n$, $k$, $\ell$ be integers with $k\geq 2$ and $1\leq \ell \leq k$, and let $\eta \in [0,1)$. Let $V$ be a set of size $n$. Suppose $S\subseteq V$, with $|S|\in \ell \mathbb{N}$. Then there exists $\tilde{E}\subseteq \{e\in \binom{V}{k}:|e\cap S|=\ell\}$ such that for every $v\in S$,
\begin{equation}\label{baranyai equation}
|\{e\in \tilde{E}: v\in e\}|= \left\lfloor \eta \binom{|S|}{\ell -1} \binom{n-|S|}{k-\ell}\right\rfloor.
\end{equation}
\end{proposition}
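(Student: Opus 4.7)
The plan is to reduce the problem to Baranyai's Theorem by exploiting the product structure of $k$-sets meeting $S$ in exactly $\ell$ vertices: every such $k$-set is uniquely of the form $T \cup T'$ with $T \in \binom{S}{\ell}$ and $T' \in \binom{V \setminus S}{k-\ell}$.

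Since $|S| \in \ell\mathbb{N}$, I would first apply Baranyai's Theorem (Theorem~\ref{Baranyai Theorem}) to the complete $\ell$-uniform hypergraph on $S$, obtaining a decomposition of $\binom{S}{\ell}$ into $q := \binom{|S|-1}{\ell-1}$ edge-disjoint perfect matchings $M_1, \dots, M_q$ of $S$. Setting $p := \binom{n-|S|}{k-\ell}$ and enumerating $\binom{V \setminus S}{k-\ell}$ as $T'_1, \dots, T'_p$, I would then define, for each pair $(i,j) \in [q] \times [p]$, the ``block''
\[
\mathcal{B}_{i,j} := \{\, T \cup T'_j : T \in M_i \,\} \subseteq \binom{V}{k}.
\]
Two observations are immediate from the construction: every edge of $\mathcal{B}_{i,j}$ meets $S$ in exactly $\ell$ vertices; and because $M_i$ is a perfect matching of $S$, every $v \in S$ lies in exactly one edge of each block $\mathcal{B}_{i,j}$. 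Moreover the blocks are pairwise disjoint (different $j$ use different $T'_j$-parts, and for fixed $j$ the blocks use the edge-disjoint matchings~$M_i$).

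Writing $D := \lfloor \eta \binom{|S|}{\ell-1}\binom{n-|S|}{k-\ell} \rfloor$, I would simply take $\tilde{E} := \bigcup_{(i,j) \in A} \mathcal{B}_{i,j}$ for any subset $A \subseteq [q] \times [p]$ with $|A| = D$; by the second observation above, $|\{e \in \tilde{E} : v \in e\}| = |A| = D$ for every $v \in S$, which is precisely \eqref{baranyai equation}.

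The only substantive point to verify, and hence the main (though mild) obstacle, is that such an $A$ exists, i.e.\ that $D \leq qp$. Unwinding, this asks for $\eta \binom{|S|}{\ell-1} \leq \binom{|S|-1}{\ell-1}$, which is equivalent to $\eta \leq (|S|-\ell+1)/|S|$; this holds in the regime of application, where $\eta < 1$ is a fixed constant and $|S|$ is taken sufficiently large that $(|S|-\ell+1)/|S|$ exceeds $\eta$. (If one wished to avoid the constraint, one could replace the target by $\lfloor \eta \binom{|S|-1}{\ell-1}\binom{n-|S|}{k-\ell} \rfloor$ and the argument would work verbatim for every $\eta \in [0,1)$.)
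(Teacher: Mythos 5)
Your proof is correct and takes a genuinely different, arguably cleaner, route. Both begin with Baranyai's Theorem, but the paper then adds $k$-tuples one at a time via a greedy process, constrained by an ordering of the groups $\{e: e\cap S\in M_i\}$ and by a degree cap, and asserts that the resulting degrees all equal $D:=\lfloor\eta\binom{|S|}{\ell-1}\binom{n-|S|}{k-\ell}\rfloor$; verifying that assertion requires a small argument about what happens inside the first group in which the cap is reached. Your decomposition into blocks $\mathcal{B}_{i,j}$ makes the uniformity transparent: the blocks partition $\{e\in\binom{V}{k}:|e\cap S|=\ell\}$, each block raises every vertex of $S$ by exactly one, and so any $D$ of them give \eqref{baranyai equation} at once. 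As for the requirement $D\le\binom{|S|-1}{\ell-1}\binom{n-|S|}{k-\ell}$, you are right that it is needed, but it is not a shortcoming of your construction specifically: the proposition cannot hold otherwise, since no vertex of $S$ can have degree exceeding $\binom{|S|-1}{\ell-1}\binom{n-|S|}{k-\ell}$ in any $\tilde{E}\subseteq\{e:|e\cap S|=\ell\}$, and the paper's greedy argument needs the same bound without remarking on it. However, your heuristic for why it holds in practice (``$\eta$ a fixed constant, $|S|$ large'') does not match the actual application: there $\eta$ depends on $n$ and $|S|$, and the bound holds for \emph{every} $n$ because $\ell$ is chosen maximal, which makes the numerator defining $\eta$ less than $t_{S_\ell}(v)=\binom{|S|-1}{\ell-1}\binom{n-|S|}{k-\ell}$ and hence $\eta<(|S|-\ell+1)/|S|$ directly.
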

\begin{proof}
The cases where $\ell=1$ or $\eta =0$ are trivial. So suppose that $2\leq \ell \leq k$ and $\eta \in (0,1)$. Apply Theorem \ref{Baranyai Theorem} to find a decomposition of the complete $\ell$-uniform hypergraph on $S$ into edge-disjoint perfect matchings $M_1,\dots, M_{\binom{|S|-1}{\ell -1}}$.

We now construct $\tilde{E}$ by adding $k$-tuples from $\{e\in \binom{V}{k}:|e\cap S|=\ell\}$ greedily, under the following constraints:
\begin{enumerate}[(i)]
\item for all $i\in \{1,\dots,\binom{|S|-1}{\ell -1}\}$, we do not add any $k$-tuples in $\{e\in \binom{V}{k}: e\cap S\in M_{i+1}\}$ unless we have already added all $k$-tuples in $\{e\in \binom{V}{k}: e\cap S\in M_{i}\}$;
\item for every $v\in S$,
$$|\{e\in \tilde{E}: v\in e\}|\leq \eta \binom{|S|}{\ell -1} \binom{n-|S|}{k-\ell}.$$
\end{enumerate}
It is clear that {\rm (i)} and {\rm (ii)} ensure that the set $\tilde{E}$ obtained in this way satisfies (\ref{baranyai equation}) for every $v\in S$.
\end{proof}
\section{Minimum edge-density conditions for fractional matchings}\label{Minimum edge-density conditions for fractional matchings}
We will use the following lemma to prove Theorem \ref{edge-density perfect theorem} inductively.
\begin{lemma}\label{high degree vertices lemma}
Let $k\geq 3$ be fixed. Suppose that $a \in (0,1/(k+1)]$, $c\in (0,1)$ and that there exists $n_0\in \mathbb{N}$ such that for all $n\geq n_0$ we have
\begin{equation}\label{assumption in statement of high degree vertcies lemma}
f^{an/(1-a)}_0(k-1,n)\leq c\binom{n}{k-1}.
\end{equation}
Then for all $\varepsilon >0$ there exists $n_1\in \mathbb{N}$ such that for all $n\geq n_1$ any $k$-uniform hypergraph $G$ on $n$ vertices with at least $an$ vertices of degree at least
$$D:=\left(c(1-a)^{k-1}+\left(1-(1-a)^{k-1}\right)+ \varepsilon \right)\binom{n-1}{k-1}$$
has a fractional matching of size $an$.
\end{lemma}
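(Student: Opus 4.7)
The plan is to use the $an$ high-degree vertices to build a fractional matching in a layered way: for each such $v$ find a fractional matching in the $(k-1)$-uniform truncated link of $v$ on $V(G)\backslash S$, then combine these into one fractional matching of $G$. Assume for convenience that $an\in\mathbb{N}$ and fix $S\subseteq V(G)$ with $|S|=an$, each vertex of $S$ having degree at least $D$. Write $\bar{S}:=V(G)\backslash S$ and $m:=|\bar{S}|=(1-a)n$; for each $v\in S$ let $L_v$ be the $(k-1)$-uniform hypergraph on $\bar{S}$ whose edges are $\{e\backslash\{v\}:e\in E(G),\ v\in e,\ e\backslash\{v\}\subseteq\bar{S}\}$.

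First I would lower-bound $e(L_v)$. Only edges through $v$ that meet $S\backslash\{v\}$ fail to contribute to $L_v$, and there are at most $\binom{n-1}{k-1}-\binom{m}{k-1}$ such edges. Using $\binom{m}{k-1}=((1-a)^{k-1}+o(1))\binom{n-1}{k-1}$, one obtains for $n$ large
$$e(L_v)\;\geq\;D-\bigl(1-(1-a)^{k-1}+o(1)\bigr)\binom{n-1}{k-1}\;\geq\;\left(c+\frac{\varepsilon}{2(1-a)^{k-1}}\right)\binom{m}{k-1}.$$
I would then apply hypothesis (\ref{assumption in statement of high degree vertcies lemma}) with $n$ replaced by $m$, which gives $f_0^{am/(1-a)}(k-1,m)\leq c\binom{m}{k-1}$ (and note $am/(1-a)=an$), and invoke Proposition~\ref{smooth proposition} with fixed $\alpha:=a/(1-a)\in(0,1/k]$ to bump this up to $f_0^{an+1}(k-1,m)\leq(c+\varepsilon/(2(1-a)^{k-1}))\binom{m}{k-1}$ for $m$ large. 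Consequently $L_v$ has a fractional matching of size at least $an+1$; scale it down to obtain a fractional matching $w_v$ of $L_v$ of size exactly $an$.

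Finally, I would glue the $w_v$ together: define $W(e):=w_v(e\backslash\{v\})/(an)$ when $e\cap S=\{v\}$ and $e\backslash\{v\}\subseteq\bar{S}$, and $W(e):=0$ otherwise. Because each such edge uniquely determines its $v$, the $w_v$ contribute to disjoint edges, and a routine check yields $\sum_{e\ni v}W(e)=1$ for $v\in S$, $\sum_{e\ni u}W(e)\leq|S|/(an)=1$ for $u\in\bar{S}$ (using that $w_v$ is a fractional matching in $L_v$), and total weight $|S|\cdot(an)/(an)=an$, so $W$ is a fractional matching of $G$ of size $an$. The main obstacle is the edge-count bookkeeping in the first step: the particular form of $D$ is engineered exactly so that after discarding the edges through $v$ that meet $S$, the residual density in $L_v$ exceeds the hypothesis threshold $c\binom{m}{k-1}$ by a fixed positive margin; if this calibration failed the hypothesis could not be invoked. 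The condition $a\leq 1/(k+1)$ plays an auxiliary role in ensuring $\alpha\leq 1/k$, which is what allows a matching of size $an$ to exist in a $(k-1)$-uniform hypergraph on $m=(1-a)n$ vertices in the first place.
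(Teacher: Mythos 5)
Your proof is correct, and it takes a genuinely different route from the paper's. You construct a fractional matching of $G$ \emph{directly}: fix the set $S$ of $an$ high-degree vertices, extract a fractional matching $w_v$ of size $an$ in the truncated link $L_v$ for \emph{every} $v\in S$, and average the corresponding lifts $e\mapsto w_v(e\setminus\{v\})$ over $v\in S$. Your bookkeeping checks out: for $v\in S$ the only contributing edges at $v$ come from $w_v$ itself, giving $\sum_{e\ni v}W(e)=1$; for $u\notin S$ each $w_v$ contributes at most $1$ before dividing by $|S|=an$, so the constraint holds; and the total weight is $|S|=an$. The edge count $e(L_v)\geq(c+\varepsilon/(2(1-a)^{k-1}))\binom{m}{k-1}$ follows exactly as you say from the calibrated form of $D$, and the hypothesis at $m=(1-a)n$ gives precisely $f_0^{an}(k-1,m)\leq c\binom{m}{k-1}$, so each $L_v$ does have the required fractional matching. (Your detour through Proposition~\ref{smooth proposition} to reach size $an+1$ is not strictly needed once you assume $an\in\mathbb{N}$, but it does no harm and is the right tool to absorb a ceiling if you drop that assumption.)

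The paper argues dually. It takes a minimum fractional vertex cover $w$, picks the single vertex $v_0\in Y$ (the $\lceil an\rceil$ high-degree vertices) of \emph{lowest} cover-weight, extracts a fractional matching $M$ only in the link of that one $v_0$, and then uses $M$ together with the cover inequalities $w(v_0)+\sum_{u\in e}w(u)\geq 1$ and the minimality of $w(v_0)$ to force $\sum_v w(v)\geq an$; duality then yields the fractional matching. The paper's route is more economical in that it needs just one link, while yours is more constructive and bypasses Proposition~\ref{duality} entirely at this step. Both rely on the same calibration of $D$, namely that removing from $d_G(v)$ the up-to-$(1-(1-a)^{k-1}+o(1))\binom{n-1}{k-1}$ edges meeting $S$ (resp.\ $Y$) still leaves more than $c\binom{m}{k-1}$ edges in the truncated link, which is exactly what makes the inductive hypothesis applicable.
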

\begin{proof}
Let $\varepsilon >0$ and choose $n_1$ sufficiently large. Consider a $k$-uniform hypergraph $G= (V,E)$ on $n$ vertices with at least $an$ vertices of degree at least $D$. Let $Y\subseteq V$ be the set of $\lceil an \rceil$ vertices of highest degree. Let $w$ be a fractional vertex cover of $G$ of least size. Consider the vertex $v_0\in Y$ with the lowest weight $w(v_0)$. Let $H$ be the neighbourhood hypergraph of $v_0$ in $G$. So
$$e(H)= d_G(v_0)\geq D= \left(c(1-a)^{k-1}+\left(1-(1-a)^{k-1}\right)+ \varepsilon \right)\binom{n-1}{k-1}.$$
Let $H':= H[V\backslash Y]$. Since the number of edges in $H$ with at least one vertex in $Y$ is at most $(1-(1-a)^{k-1}+o(1))\binom{n-1}{k-1}$, it follows that
\begin{align*}
e(H')&\geq e(H)- \left(1-(1-a)^{k-1}+o(1)\right)\binom{n-1}{k-1} \geq \left(c(1-a)^{k-1}+\varepsilon/2\right)\binom{n-1}{k-1}\\
&\geq (c+\varepsilon/3)\binom{|H'|}{k-1},
\end{align*}
where in the last two inequalities we use that $n_1$ was chosen sufficiently large. Note that $|H'|\geq n/2$, so we may assume that $|H'|\geq n_0$. Now, (\ref{assumption in statement of high degree vertcies lemma}) and Proposition \ref{smooth proposition} together imply that $H'$ has a fractional matching of size
$$a|H'|/ (1-a)+1= a(n-\lceil an \rceil)/ (1-a)+1\geq an.$$
So let $M$ be a fractional matching of $H'$ of size $an$. Note that for all $v\in V\backslash Y$, 
$$\sum\limits_{e\in E(H'): v\in e} M(e)\leq 1.$$
So we have that
$$\sum\limits_{v\in V}w(v) \geq \sum\limits_{v\in Y}w(v) + \sum\limits_{e\in E(H')}\sum\limits_{v\in e} M(e)w(v).$$
By the minimality of $w(v_0)$, this implies that
\begin{align*}
\sum\limits_{v\in V}w(v)&\geq anw(v_0)+ \sum\limits_{e\in E(H')}\sum\limits_{v\in e} M(e)w(v)= \sum\limits_{e\in E(H')} M(e)w(v_0)+ \sum\limits_{e\in E(H')}\sum\limits_{v\in e} M(e)w(v)\\
&= \sum\limits_{e\in E(H')} M(e)\left(w(v_0)+ \sum\limits_{v\in e}w(v)\right)\geq \sum\limits_{e\in E(H')} M(e)= an.
\end{align*}
The last inequality holds because by definition of $H'$ we have $e\cup \{v_0\}\in E$ for all $e\in E(H')$, and so $w(v_0)+\sum_{v\in e} w(v)\geq 1$. 

Hence the size of $w$ is at least $an$, so by Proposition \ref{duality} the largest fractional matching in $G$ has size at least $an$.
\end{proof}

The proof of Theorem~\ref{edge-density perfect theorem} proceeds as follows. Suppose $G$ has no fractional matching of size $n/(k+d)$. Then we use Lemma~\ref{high degree vertices lemma} and induction to show that $G$ contains few vertices of high degree. Moreover, by duality we show that $G$ has a small fractional vertex cover. We combine these two facts to show that the number of edges of $G$ does not exceed the expression stated in Theorem~\ref{edge-density perfect theorem}.

\removelastskip\penalty55\medskip\noindent{\bf Proof of Theorem~\ref{edge-density perfect theorem}.}
The proof will proceed by induction on $k$. The base step, $k=2$, follows\COMMENT{COMMENT:
$$1-\left(1-\frac{1}{2+d}\right)^2=\frac{2}{2+d}-\frac{2-1}{(2+d)^2}$$
rearranges to
$$\frac{(2+d)^2}{(2+d)^2}-\frac{(2+d-1)^2}{(2+d)^2}=\frac{2(2+d)}{(2+d)^2}-\frac{2-1}{(2+d)^2}$$
which rearranges to
$$d(2+d)-(1+d)^2+1=0,$$
which clearly holds for all positive $d$.} by Theorem~\ref{base step}, setting $x:=1/(2+d)$.

Now consider some $k> 2$ and suppose that the theorem holds for all smaller values of $k$. Fix $d\geq 1$. Let $\varepsilon >0$ and let $n_0\in \mathbb{N}$ be sufficiently large compared to $1/\varepsilon$, $k$ and $d$. For convenience let us define
$$\xi := \left(\frac{k-1}{k+d-1}-\frac{k-2}{(k+d-1)^{k-1}} \right)\left(\frac{k+d-1}{k+d}\right)^{k-1}+ \left(1-\left(\frac{k+d-1}{k+d}\right)^{k-1}\right)<1.$$
Consider any $k$-uniform hypergraph $G=(V,E)$ on $n\geq n_0$ vertices, and suppose that the largest fractional matching of $G$ is of size less than $n/(k+d)$. Then by Proposition \ref{duality} there exists a fractional vertex cover, $w$ say, of $G$ with size less than $n/(k+d)$. Let $a:=1/(k+d)$. So $a/(1-a)=1/(k+d-1)$. Let
$$c:=\frac{k-1}{k+d-1}-\frac{k-2}{(k+d-1)^{k-1}}+\varepsilon /4.$$
Then by induction,
$$f_0^{n'/(k+d-1)}(k-1,n)\leq c\binom{n'}{k-1},$$
for all sufficiently large $n'$. Thus, as $n_0$ is sufficiently large, Lemma \ref{high degree vertices lemma} implies that there are less than $n/(k+d)$ vertices of $G$ with degree at least $(\xi +\varepsilon/2) \binom{n-1}{k-1}$.

Let $S$ be the set of $|S|$ vertices of $G$ with highest degree, where $|S|\in k!\mathbb{N}$ is minimal such that $|S|\geq n/(k+d)$. So $d_G(v)<( \xi +\varepsilon/2 )\binom{n-1}{k-1}$ for all $v\in V\backslash S$. For every $i\in \{0,\dots,k\}$ let $S_i:=\{e\in \binom{V}{k}: |e\cap S|=i \}$. Given $X\subseteq \binom{V}{k}$, for all $v\in V$ let $t_X(v):= |\{e\in X: v\in e\}|$. Note that for all $v\in S$ the value of $t_{S_i}(v)$ is the same and $t_{S_0}(v)=0$. Let $\ell \in \{0,\dots,k\}$ be maximal such that for any $v\in S$ we have $\sum_{i=0}^{\ell-1}t_{S_i}(v)\leq \xi \binom{n-1}{k-1}$. Let $E''':= \binom{V}{k}\backslash S_k$. Then for each $v\in S$,
\begin{equation}\label{E'''}
t_{E'''}(v)= \left(1-\frac{1}{(k+d)^{k-1}} +o(1)\right)\binom{n-1}{k-1}> \xi \binom{n-1}{k-1}.
\end{equation}
The final inequality holds here for sufficiently large $n_0$, as it rearranges\COMMENT{COMMENT:
$$\left(1-\frac{1}{(k+d)^{k-1}}\right)> \left(\frac{k-1}{k+d-1}-\frac{k-2}{(k+d-1)^{k-1}} \right)\left(\frac{k+d-1}{k+d}\right)^{k-1}+ \left(1-\left(\frac{k+d-1}{k+d}\right)^{k-1}\right)$$
rearranges to
$$(k+d)^{k-1}-1> (k-1)(k+d-1)^{k-2}-(k-2)+(k+d)^{k-1}-(k+d-1)^{k-1}$$
which rearranges to
$$1< d(k+d-1)^{k-2}+(k-2).$$} to $d(k+d-1)^{k-2}+ (k-2) +o(1)> 1$. This shows that $\ell \leq k-1$. Let
$$\eta:= \left(\xi \binom{n-1}{k-1}-\sum\limits_{i=1}^{\ell-1}t_{S_i}(v)\right)/\binom{|S|}{\ell-1}\binom{n-|S|}{k-\ell}.$$
So $\eta \in [0,1)$. Apply Proposition \ref{Baranyai Corollary} with parameters $n$, $k$, $\ell$, $\eta$ to obtain a set $\tilde{E}\subseteq S_\ell$ such that for every $v\in S$,
$$t_{\tilde{E}}(v)=\left\lfloor \eta \binom{|S|}{\ell -1} \binom{n-|S|}{k-\ell}\right\rfloor.$$
Let $E'':=\bigcup_{i=0}^{\ell-1}S_i\cup \tilde{E}$. Then each $v\in S$ satisfies
\begin{equation}\label{t_E''}
t_{E''}(v)= \left\lfloor \xi \binom{n-1}{k-1}\right\rfloor.
\end{equation}
We can now give a lower bound on the size of $E''$ as follows: for each vertex $v\in S$ we count the number of $k$-tuples in $E''$ that contain $v$, and then adjust for the $k$-tuples that contain several vertices of $S$ and were thus counted several times as a result. Since $S_0\subseteq E''$ this yields
$$|E''|\stackrel{(\ref{t_E''})}{\geq} \left\lfloor \xi \binom{n-1}{k-1} \right\rfloor \frac{n}{k+d}+|S_0|-\sum\limits_{j=1}^{k-1}(j-1)|S_j|.$$
Note that since $E''\subseteq E'''$ we only need to consider values of $j$ up to $k-1$ in the summation, rather than $k$. Now, note that
\begin{align*}
|S_0|-\sum\limits_{j=1}^{k}(j-1)|S_j|&=\binom{n}{k}-\sum\limits_{v\in S}\binom{n-1}{k-1}=\binom{n}{k}-\left(\frac{n}{k+d}+o(1)\right)\binom{n-1}{k-1}\\
&=\left(1-\frac{k}{k+d}+o(1)\right)\binom{n}{k}.
\end{align*}
Hence, as $(k-1)|S_k|=((k-1)/(k+d)^{k}+o(1))\binom{n}{k}$,
\begin{equation}\label{E'' size}
|E''|\geq (\xi + o(1))\binom{n-1}{k-1} \frac{n}{k+d}+\left(1-\frac{k}{k+d}+\frac{k-1}{(k+d)^{k}}+o(1)\right)\binom{n}{k}.
\end{equation}
Now, let $E':= E\cap E''$. Also, note that by Proposition \ref{sums minimums vertex covers proposition},
\begin{align*}
e(G)\leq \sum\limits_{e\in E} \sum\limits_{v\in e\backslash S} w(v)+ \sum\limits_{e\in E'} \sum\limits_{v\in e\cap S} w(v)+ |E\backslash E'|.
\end{align*}
Recall that $d_G(v)<(\xi +\varepsilon/2 )\binom{n-1}{k-1}$ for all $v\in V\backslash S$ and that by (\ref{t_E''}) the number of edges in $E'$ incident to $v$ is at most $\xi \binom{n-1}{k-1}$ for all $v\in S$. So
$$e(G)\leq \sum\limits_{v\in V} (\xi +\varepsilon/2)\binom{n-1}{k-1} w(v)+ |E\backslash E'|.$$
Now note that $|E\backslash E'|\leq |\binom{V}{k}\backslash E''|=\binom{n}{k}-|E''|$ and recall that the size of $w$ is less than $n/(k+d)$. So
\begin{eqnarray*}
e(G)&< &(\xi +\varepsilon/2)\binom{n-1}{k-1} \frac{n}{k+d}+ \binom{n}{k}-|E''|\\
&\stackrel{(\ref{E'' size})}{\leq} &(\xi +\varepsilon/2)\binom{n-1}{k-1} \frac{n}{k+d}-(\xi + o(1))\binom{n-1}{k-1} \frac{n}{k+d}\\
&&+\left(\frac{k}{k+d}-\frac{k-1}{(k+d)^{k}}+o(1)\right)\binom{n}{k} \\
&\leq & \left(\frac{k}{k+d}-\frac{k-1}{(k+d)^{k}} +\varepsilon \right)\binom{n}{k}.
\end{eqnarray*}
The final inequality holds since $n_0$ is sufficiently large. By definition, this shows that
$$f_0^{n/(k+d)}(k,n)\leq \left(\frac{k}{k+d}-\frac{k-1}{(k+d)^{k}} +o(1)\right)\binom{n}{k}.$$ This completes the inductive step and hence the proof.
\endproof
\section{Minimum degree conditions for fractional matchings}\label{Minimum vertex degree conditions for fractional matchings}
The following proposition generalises Proposition 1.1 in \cite{Large matchings}, with a similar proof idea\COMMENT{COMMENT: Note that in the proposition we always have that $an(k-d)\leq n-d$, since $a\leq 1/k$. So the expression $f_0^{an}(k-d,n-d)$ makes sense}. It allows us to transform bounds involving edge densities into bounds involving $d$-degrees.
\begin{proposition}\label{main proposition}
Let $\varepsilon \geq 0$, let $k$, $d$, $n$ be integers with $n\geq k\geq 3$, $1\leq d\leq k-2$, and $d<(1-\varepsilon^{1/d})n$. Let $a\in [0,(1-\varepsilon^{1/d})/k]$. Suppose $H$ is a $k$-uniform hypergraph on $n$ vertices, such that for at least $(1-\varepsilon)\binom{n}{d}$ $d$-tuples of vertices $L\in \binom{V(H)}{d}$ we have
$$deg_H(L)\geq f_0^{an}(k-d,n-d).$$
Then $H$ has a fractional matching of size $an$.
\end{proposition}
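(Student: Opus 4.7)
The plan is to use LP duality via Proposition~\ref{duality}: it suffices to show that every fractional vertex cover $w$ of $H$ has total weight $T := \sum_{v \in V(H)} w(v)$ at least $an$, so I fix such a $w$ and aim to prove $T \geq an$. Write $\mathcal{L} \subseteq \binom{V(H)}{d}$ for the collection of ``good'' $d$-tuples $L$ with $deg_H(L) \geq f_0^{an}(k-d,n-d)$, so by hypothesis $|\mathcal{L}| \geq (1-\varepsilon)\binom{n}{d}$.

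For each $L \in \mathcal{L}$, the neighbourhood hypergraph $H_L$ has at least $f_0^{an}(k-d,n-d)$ edges, hence a fractional matching of size $an$, and by Proposition~\ref{duality} applied to $H_L$ every fractional vertex cover of $H_L$ has size at least $an$. Write $T_L := \sum_{v \in L} w(v)$. Since for each $e \in E(H_L)$ the set $L \cup e$ is an edge of $H$, we have $\sum_{v \in e} w(v) \geq 1 - T_L$. So when $T_L < 1$ the restriction of $w/(1-T_L)$ to $V(H) \setminus L$ is a fractional vertex cover of $H_L$, giving $\sum_{v \notin L} w(v) \geq an(1-T_L)$, i.e.\
\[ T \;\geq\; an + (1-an)\,T_L, \]
an inequality which holds trivially when $T_L \geq 1$.

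The strategy is to apply this with an $L \in \mathcal{L}$ for which $T_L$ is very small. Let $Z := \{v \in V(H) : w(v) = 0\}$; if some $L \in \mathcal{L}$ lies inside $Z$ then $T_L = 0$ and the displayed inequality immediately gives $T \geq an$. Otherwise every $d$-subset of $Z$ lies outside $\mathcal{L}$, which forces $\binom{|Z|}{d} \leq \varepsilon \binom{n}{d}$ and hence $|Z| \leq \varepsilon^{1/d}n + d - 1$ (by the routine estimate $\binom{m}{d} \geq ((m-d+1)/n)^d \binom{n}{d}$). To recover a small $T_L$ in this case I pass to the thresholded set $Z_\eta := \{v : w(v) < \eta\}$, use the Markov-style bound $|Z_\eta| \geq n - T/\eta$, and choose $\eta$ just above $T/((1-\varepsilon^{1/d})n)$ so that $|Z_\eta| > \varepsilon^{1/d}n$; the same counting as above then produces an $L \in \mathcal{L}$ with $L \subseteq Z_\eta$ and hence $T_L < d\eta$, which I substitute back into the displayed inequality.

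The main obstacle is converting this into the bound $T \geq an$, particularly when $an > 1$, where the coefficient $(1-an)$ is negative and the per-$L$ inequality degrades unless $T_L$ is essentially zero. The hypotheses $a \leq (1-\varepsilon^{1/d})/k$ (equivalently $kan \leq (1-\varepsilon^{1/d})n$) and $d<(1-\varepsilon^{1/d})n$ are exactly what is needed to close the resulting quantitative gap and to ensure that $|Z_\eta|$ can be made to exceed $d$ in the first place. I expect this final accounting --- plausibly via an iterative refinement of the lower bound on $T$, or by aggregating the per-$L$ inequalities over several well-chosen $L$'s simultaneously --- to be the technical heart of the argument.
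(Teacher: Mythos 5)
Your setup is right---LP duality, the neighbourhood hypergraphs $H_L$, and the idea that one must find a low-weight $d$-tuple $L\in\mathcal{L}$---and up to the displayed inequality $T\geq an+(1-an)T_L$ your reasoning is correct. But the weakness you identify in the final paragraph is a genuine gap, not a routine accounting step, and the specific reweighting you use ({\it scaling} $w$ by $1/(1-T_L)$ on $V\setminus L$) cannot close it. When $an>1$, the inequality $T\geq an+(1-an)T_L$ only yields $T\geq an$ if $T_L=0$ exactly; with $T_L>0$ it gives a bound strictly below $an$. Substituting the Markov-type estimate $T_L<d\eta$ with $\eta\approx T/((1-\varepsilon^{1/d})n-d)$ gives $T>an/(1+\gamma)$ where $\gamma:=d(an-1)/((1-\varepsilon^{1/d})n-d)>0$, and iterating does not help because the Markov bound $|Z_\eta|\geq n-T/\eta$ depends on the true value of $T$, not on any lower bound for it; aggregating the per-$L$ inequalities over several $L$ faces the same issue.

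The paper's fix is a different reweighting: {\it shift} rather than just scale. After taking $U$ to be the $\lfloor\varepsilon^{1/d}n\rfloor+d$ lowest-weight vertices and $L\in\binom{U}{d}$, one may assume (by averaging, which preserves both $\sum_v w(v)$ and $\sum_{v\in e}w(v)$ for $e\supseteq L$) that every $v\in L$ has the same weight $\lambda$. One then shows $\lambda<1/k$ (otherwise the $\geq(1-\varepsilon^{1/d})n$ vertices of weight $\geq\lambda$ already force $T\geq(1-\varepsilon^{1/d})n/k\geq an$), and defines
\[
w'(v):=\min\Bigl\{\max\Bigl\{0,\tfrac{w(v)-\lambda}{1-k\lambda}\Bigr\},1\Bigr\}.
\]
For every $(k-d)$-set $e$ with $L\cup e\in E(H)$ one has $\sum_{v\in e}(w(v)-\lambda)\geq 1-k\lambda$, so $w'$ restricted to $V\setminus L$ is a fractional vertex cover of $H_L$. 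The total size of $w'$ is at most $\bigl(T-n\lambda+\varepsilon^{1/d}n\lambda\bigr)/(1-k\lambda)$: the point is that subtracting $\lambda$ from {\it every} vertex saves roughly $(1-\varepsilon^{1/d})n\lambda$ (only the $\leq\varepsilon^{1/d}n$ vertices of $U\setminus L$ can have $w(v)<\lambda$), and the hypothesis $a\leq(1-\varepsilon^{1/d})/k$ makes this saving exactly cancel the $1/(1-k\lambda)$ blow-up, giving size at most $an$ with no residual positive term. Your scaling-only map never subtracts anything from vertices outside $L$, so it cannot access this compensation, and that is exactly why your bound degrades for $an>1$.
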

\begin{proof}
The outline of the proof goes as follows. We will assume that there is no fractional matching of size $an$ in a $k$-uniform hypergraph $H=(V,E)$ on $n$ vertices and then show that for more than $\varepsilon\binom{n}{d}$ $d$-tuples of vertices $L\in \binom{V}{d}$, the neighbourhood hypergraph $H(L)$ of $L$ in $H$ has no fractional matching of size $an$. This will imply that for more than $\varepsilon\binom{n}{d}$ $d$-tuples of vertices $L$, $deg_H(L)=e(H(L))< f_0^{an}(k-d,n-d)$. This will prove the result in contrapositive.

So suppose $H=(V,E)$ is an $n$-vertex $k$-uniform hypergraph, with no fractional matching of size $an$. Then by Proposition \ref{duality}, $H$ has a fractional vertex cover, $w$ say, of size less than $an$. Let 
$$E_w:= \left\{e\in \binom{V}{k}: \sum\limits_{v\in e} w(v) \geq 1\right\},$$
and let $H_w:=(V,E_w)$. Since $H\subseteq H_w$ we can, without loss of generality, replace $H$ with $H_w$. Let $U\subseteq V$ be the set of $\lfloor \varepsilon^{1/d}n\rfloor+d$ vertices of smallest weights. Let $\mathcal{L}:= \binom{U}{d}$. Note that
$$|\mathcal{L}|=\binom{\lfloor \varepsilon^{1/d} n\rfloor+d}{d}> \frac{(\varepsilon^{1/d} n)^d}{d!}=\varepsilon \frac{n^d}{d!} \geq \varepsilon \binom{n}{d}.$$
Consider any $L\in \mathcal{L}$. Let $H_w(L)$ be the neighbourhood hypergraph of $L$ in $H_w$. We will show that $H_w(L)$ has no fractional matching of size $an$. Without loss of generality we may assume that the elements of $L$ all have equal weights, $w(L)$ say. (If not, we could replace these weights by their average, which would alter neither $\sum_{v\in V}w(v)$ nor $\sum_{v\in e}w(v)$ for any $e\supseteq L$. These are the only two quantities involving weights that we will consider in what follows.) Observe that $w(L)<1/k$, else the size of $w$ would be at least
$$\frac{n(1-\varepsilon^{1/d})}{k} \geq an.$$
We now define a new weight function $w'(v)$ on the vertices in $V$:
$$w'(v):=\min \left\{ \max \left\{ 0, w^*(v) \right\}, 1 \right\},\,\,\,\,\mathrm{ where }\,\,\,\,\,\,  w^*(v):=\frac{w(v)-w(L)}{1-kw(L)}.$$
Note that only for vertices $u\in U\backslash L$ can it be that $w^*(u)<0$. Note also that since $w(v)\geq 0$ for all $v\in V$, we have that $w^*(u)\geq -w(L)/(1-kw(L))$ for such vertices $u$. Hence,
\begin{align*}
\sum\limits_{v\in V}w'(v)&\leq \left( \sum\limits_{v\in V} w^*(v)\right) + |U\backslash L|\frac{w(L)}{1-kw(L)}<\frac{an-nw(L)+\varepsilon^{1/d}nw(L)}{1-kw(L)}\\
&= an\frac{1-(1/a)(1-\varepsilon^{1/d})w(L)}{1-kw(L)}\leq an,
\end{align*}
and for any given $e\in \{e'\in E_w: e'\supseteq L\}$ we have that
$$\sum\limits_{v\in e}w'(v)\geq \min \left\{ \frac{\sum_{v\in e}w(v)-kw(L)}{1-kw(L)}, 1\right\} \geq \min \left\{ \frac{1-kw(L)}{1-kw(L)}, 1 \right\}= 1.$$
Moreover, $\sum_{v\in L}w'(v)=0$. It follows that the function $w'$ restricted to $V\backslash L$ is a fractional vertex cover of $H_w(L)$ of size less than $an$, and so by Proposition \ref{duality}, $H_w(L)$ has no fractional matching of size $an$, which completes the proof.
\end{proof}
We can now derive Theorems \ref{minimum degree theorem} and \ref{minimum degree theorem perfect}.
\removelastskip\penalty55\medskip\noindent{\bf Proof of Theorem~\ref{minimum degree theorem}.}
Let $k':= k-d$ and $n':= n-d$. Note that Theorem~\ref{Frankl theorem} implies that
\begin{equation}\label{Frankl equation}
m_0^{an}(k,n)=\left(1-(1-a)^k +o(1)\right)\binom{n}{k}
\end{equation}
for all $a\leq 1/2k$. Now Proposition~\ref{smooth proposition} implies that for all $0\leq a\leq \min\{1/2(k-d), 1/k\}$,
\begin{align*}
f_0^{an}(k-d,n-d)&\hspace{0.14cm}=f_0^{a(n'+d)}(k',n')\leq f_0^{an'+1}(k',n')\leq m_0^{an'}(k',n')+o(1)\binom{n'}{k'}\\
&\stackrel{(\ref{Frankl equation})}{=} \left( 1-(1-a)^{k'}+o(1)\right)\binom{n'}{k'}= \left( 1-(1-a)^{k-d}+o(1)\right)\binom{n-d}{k-d}.
\end{align*}
The upper bound in Theorem~\ref{minimum degree theorem} follows now from Proposition~\ref{main proposition} applied with $\varepsilon =0$. The lower bound follows from (\ref{fractional matching general lower bound}).
\endproof
\removelastskip\penalty55\medskip\noindent{\bf Proof of Theorem~\ref{minimum degree theorem perfect}.}
Let $k':= k-d$ and $n':= n-d$. Then Theorem \ref{edge-density perfect theorem} and Proposition~\ref{smooth proposition} together imply that
\begin{align*}
f_0^{n/k}(k-d,n-d)&=f_0^{(n'+d)/(k'+d)}(k',n')\leq f_0^{n'/(k'+d)+1}(k',n')\\
&\leq\left( \frac{k'}{k'+d}-\frac{k'-1}{(k'+d)^{k'}}+o(1)\right)\binom{n'}{k'}\\
&= \left( \frac{k-d}{k}-\frac{k-d-1}{k^{k-d}}+o(1)\right)\binom{n-d}{k-d}.
\end{align*}
So Theorem \ref{minimum degree theorem perfect} follows now from Proposition~\ref{main proposition} applied with $\varepsilon =0$.
\endproof
The case $\varepsilon >0$ of Proposition~\ref{main proposition} will be used in the next section.
\section{Constructing integer matchings from fractional ones}\label{Constructing integer matchings from fractional ones}
We will construct integer matchings from fractional ones using the Weak Hypergraph Regularity Lemma. Before stating this we will need the following definitions.

Given a $k$-tuple $(V_1,\dots, V_k)$ of disjoint subsets of the vertices of a $k$-uniform hypergraph $G=(V,E)$, we define $(V_1,\dots, V_k)_G$ to be the $k$-partite subhypergraph with vertex classes $V_1,\dots, V_k$ induced on $G$. We let
$$d_G(V_1,\dots, V_k)=\frac{e((V_1,\dots, V_k)_G)}{\prod_{i\in \{1,\dots, k\}}|V_i|}$$
denote the \textit{density} of $(V_1,\dots, V_k)_G$.
\begin{definition}[$\varepsilon$-regularity]
Let $\varepsilon>0$, let $G=(V,E)$ be a $k$-uniform hypergraph, and let $V_1,\dots, V_k\subseteq V$ be disjoint. We say that $(V_1,\dots, V_k)_G$ is $\varepsilon$-regular if for every subhypergraph $(V_1',\dots, V_k')_G$ with $V_i'\subseteq V_i$ and $|V_i'|\geq \varepsilon |V_i|$ for each $i\in \{1,\dots, k\}$, we have that
$$|d_G(V_1',\dots, V_k')-d_G(V_1,\dots, V_k)|< \varepsilon.$$
\end{definition}
The following result was proved by Chung~\cite{FRK}. The proof follows the lines of that of the original Regularity Lemma for graphs~\cite{SRL}.
\begin{lemma}[Weak Hypergraph Regularity Lemma]
For all integers $k\geq 2$, $L_0\geq 1$, and every $\varepsilon >0$ there exists $N=N(\varepsilon, L_0, k)$ such that if $G=(V,E)$ is a $k$-uniform hypergraph on $n \geq N$ vertices, then $V$ has a partition $V_0,\dots,V_L$ such that the following properties hold:
\begin{enumerate}[{\rm(i)}]
\item $L_0 \leq L \leq N$ and $|V_0| \leq \varepsilon n$,
\item $|V_1|=\dots=|V_L|$,
\item for all but at most $\varepsilon \binom{L}{k}$ $k$-tuples $\{i_1,\dots,i_k\}\in \binom{[L]}{k}$, we have that $(V_{i_1},\dots,V_{i_k})_{G}$ is $\varepsilon$-regular.
\end{enumerate}
\end{lemma}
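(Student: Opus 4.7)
The plan is to prove the Weak Hypergraph Regularity Lemma by the classical density-increment strategy of Szemer\'edi, adapted to $k$-uniform hypergraphs in essentially the same way as for graphs. For a partition $\mathcal{P} = \{V_0,V_1,\dots,V_L\}$ of $V(G)$, where $V_0$ is the exceptional class, I would introduce the \emph{index}
$$q(\mathcal{P}) := \sum_{\{i_1,\dots,i_k\}\in \binom{[L]}{k}} \frac{|V_{i_1}|\cdots|V_{i_k}|}{n^k}\, d_G(V_{i_1},\dots,V_{i_k})^2.$$
Two routine observations drive the argument: (a) $q(\mathcal{P})\leq 1$ for every partition, since each density lies in $[0,1]$ and the weights sum to at most $1$; and (b) $q$ is monotone under refinement, i.e.\ if $\mathcal{P}'$ refines $\mathcal{P}$ then $q(\mathcal{P}')\geq q(\mathcal{P})$, by Cauchy--Schwarz applied class-by-class inside each $k$-tuple.

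The core ingredient is a \emph{defect Cauchy--Schwarz} estimate: if $(V_{i_1},\dots,V_{i_k})_G$ fails to be $\varepsilon$-regular, witnessed by sets $V'_{i_j}\subseteq V_{i_j}$ with $|V'_{i_j}|\geq \varepsilon |V_{i_j}|$ and with density differing from $d_G(V_{i_1},\dots,V_{i_k})$ by at least $\varepsilon$, then refining by splitting each $V_{i_j}$ into $V'_{i_j}$ and $V_{i_j}\setminus V'_{i_j}$ boosts the contribution of this $k$-tuple to $q$ by at least $\varepsilon^{k+2}\cdot |V_{i_1}|\cdots|V_{i_k}|/n^k$. Summing this bound over the at least $\varepsilon\binom{L}{k}$ irregular $k$-tuples present in a partition that fails condition (iii), and simultaneously refining with respect to all of them, yields a new partition $\mathcal{P}^{*}$ with $q(\mathcal{P}^{*})\geq q(\mathcal{P})+\delta$ for some $\delta=\delta(\varepsilon,k)>0$, say $\delta=\varepsilon^{k+3}/k!$.

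The iteration then proceeds exactly as in the graph case. I would start with any equipartition of $V(G)$ into $L_0$ parts, placing the at most $L_0$ leftover vertices into $V_0$. At each step, if the current equipartition satisfies (iii) we stop; otherwise we apply the defect refinement above and then re-equalise by repartitioning each class into subclasses of a common small size $t$, sweeping the residual vertices of each class into $V_0$. Choosing $t$ small enough (a tower-type function of $\varepsilon$ and $L_0$) guarantees both that $|V_0|$ grows by at most a small fraction of $\varepsilon n$ per round and that the re-equalisation diminishes the index gain by at most $\delta/2$. Since $q\in[0,1]$ and each round strictly increases $q$ by at least $\delta/2$, the process halts after at most $\lceil 2/\delta\rceil$ rounds, and the resulting $L$ is bounded by an iterated exponential of $L_0$ and $1/\varepsilon$; this yields the required $N(\varepsilon,L_0,k)$.

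The main obstacle I expect is the equipartition bookkeeping in the refinement step: after subdividing, each of the $L$ classes can be cut into up to $2^{\binom{L-1}{k-1}}$ atoms, so one must choose the common subclass size $t$ carefully so that (a) it approximately divides every atom size, (b) the cumulative exceptional set stays below $\varepsilon n$ across all rounds, and (c) enough of the defect gain survives the re-equalisation to force termination. None of this is conceptually different from Szemer\'edi's original proof --- the $k$-uniform version uses $k$-tuples of classes and $n^k$ in the normalisation, but the combinatorial skeleton is identical --- so the argument goes through after these routine modifications.
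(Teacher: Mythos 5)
The paper does not prove this lemma: it cites Chung \cite{FRK}, noting only that the proof follows the lines of Szemer\'edi's original Regularity Lemma \cite{SRL}. Your proposal is exactly that argument --- the standard energy-increment proof with the index $q(\mathcal{P})$, defect Cauchy--Schwarz giving a per-cell gain of order $\varepsilon^{k+2}$, a common refinement over all irregular $k$-tuples, and re-equalisation with the leftover swept into $V_0$ --- and your bookkeeping (the $\varepsilon^{k+3}/k!$ increment, the $2^{\binom{L-1}{k-1}}$ atoms per class, the tower-type bound on $L$) is consistent with how Chung's proof goes, so the proposal is correct and matches the approach the paper relies on.
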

We call the partition classes $V_1,\dots,V_L$ \textit{clusters}, and $V_0$ the \textit{exceptional set}. For our purposes we will in fact use the degree form of the Weak Hypergraph Regularity Lemma.
\begin{lemma}[Degree Form of the Weak Hypergraph Regularity Lemma]\label{DFWHRL}
For all integers $k \geq 2$, $L_0\geq 1$ and every $\varepsilon >0$, there is an $N=N(\varepsilon,L_0,k)$ such that for every $d\in[0,1)$ and for every hypergraph $G=(V,E)$ on $n\geq N$ vertices there exists a partition of $V$ into $V_0,V_1,\dots,V_L$ and a spanning subhypergraph $G'$ of $G$ such that the following properties hold:
\begin{enumerate}[{\rm (i)}]
\item $L_0\leq L \leq N$ and $|V_0|\leq \varepsilon n$,
\item $|V_1|=\dots=|V_L|=: m$,
\item $d_{G'}(v)>d_G(v)-(d+\varepsilon)n^{k-1}$ for all $v\in V$,
\item every edge of $G'$ with more than one vertex in a single cluster $V_i$, for some $i\in \{1,\dots, L\}$, has at least one vertex in $V_0$,
\item for all $k$-tuples $\{i_1,\dots,i_k\}\in \binom{[L]}{k}$, we have that $(V_{i_1},\dots,V_{i_k})_{G'}$ is $\varepsilon$-regular and has density either $0$ or greater than $d$.
\end{enumerate}
\end{lemma}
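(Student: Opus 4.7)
The plan is to derive this degree form from the plain Weak Hypergraph Regularity Lemma (stated just above) via the standard cleaning argument: apply the plain version with sharpened parameters, then define $G'$ by pruning every edge whose presence would prevent (iv) or (v) from holding, and verify that the removal is sparse enough through each vertex to yield (iii).

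Concretely, first select auxiliary parameters $L_0' \geq \max\{L_0, C(k)/\varepsilon\}$ (to guarantee that each cluster has at most $\varepsilon n/k^2$ vertices) and $\varepsilon' > 0$ sufficiently small in terms of $\varepsilon, L_0, k$. Apply the plain lemma with parameters $\varepsilon'$ and $L_0'$ to obtain the partition $V_0, V_1, \ldots, V_L$. Form $G'$ by removing from $G$ every edge $e$ satisfying at least one of the following: (a) the vertices of $e$ lie entirely outside $V_0$ but two of them lie in a common cluster $V_i$; (b) the $k$-tuple of distinct clusters containing $e$ is not $\varepsilon'$-regular in $G$; (c) that $k$-tuple has density at most $d$ in $G$.

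Properties (i), (ii), (iv) are then immediate. For (v), each $k$-tuple of distinct clusters in $G'$ either keeps all of its $G$-edges (so it is $\varepsilon'$-regular, hence $\varepsilon$-regular, with density greater than $d$) or has been emptied (density $0$). The main work is (iii). Fix $v \in V$ and assume $v \in V_i$ for some $i \geq 1$ (the $v \in V_0$ case is analogous). The edges through $v$ deleted split into three contributions. Type~(a) contributes at most $(m-1)\binom{n-2}{k-2} + O\bigl(L m^2 n^{k-3}\bigr) = O(m n^{k-2})$, which is at most $\varepsilon n^{k-1}/3$ thanks to our choice of $L_0'$. Type~(b) contributes at most (number of non-regular $k$-tuples of clusters containing $V_i$) times $m^{k-1}$; using the bound $\varepsilon' \binom{L}{k}$ on the total number of non-regular tuples, this can be made at most $\varepsilon n^{k-1}/3$ for $\varepsilon'$ sufficiently small. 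Type~(c) is controlled by the density constraint: summing the edge-contributions over low-density tuples $T$ containing $V_i$, an averaging argument based on $e(T) \leq d m^k$ gives at most $d n^{k-1} + \varepsilon n^{k-1}/3$.

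The main obstacle is the parameter hierarchy in type~(b): the naive bound has the form $\varepsilon' L n^{k-1}$ where $L$ may be as large as $N(\varepsilon', L_0', k)$, so one needs $\varepsilon' \cdot N(\varepsilon', L_0', k)$ to be negligible compared to $\varepsilon$. This is the familiar delicate point in degree-form arguments, handled by carefully choosing $\varepsilon'$ after $L_0'$ is fixed (and, if needed, by moving any vertex with too much loss into $V_0$ to maintain the pointwise bound while keeping $|V_0| \leq \varepsilon n$). Once the parameters are set correctly, the remaining estimates are routine counting.
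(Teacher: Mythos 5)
Your overall plan — apply the plain Weak Hypergraph Regularity Lemma with sharpened parameters, then prune edges to enforce (iv) and (v) and verify (iii) by counting removals per vertex — is the right skeleton and the same one the paper uses. However, there are genuine gaps in the execution.

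\textbf{The averaging argument for type (c) does not give a pointwise bound.} Your claim that ``$e(T)\le d m^k$ gives at most $d n^{k-1}+\varepsilon n^{k-1}/3$'' bounds the \emph{average} loss over vertices of $V_i$, not the loss at a fixed $v$. In an $\varepsilon'$-regular low-density tuple a single vertex can have degree as large as $m^{k-1}$ (regularity does not constrain sets of size~$1$), so a vertex that is heavy in many low-density tuples through $V_i$ can lose on the order of $n^{k-1}/(k-1)!$ edges, which swamps $(d+\varepsilon)n^{k-1}$ when $d$ is small. The paper handles this by a marking step: in each low-density $\varepsilon'$-regular tuple, only the edges in excess of $(d+2\varepsilon')m'^{k-1}$ per vertex are marked, regularity forces the set of heavy vertices per class to have size $<\varepsilon' m'$, so the total number of marked edges is $O(\varepsilon' n^k)$; vertices with $\ge \varepsilon n^{k-1}/10$ marked incident edges are then shunted into $V_0$. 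You need some such device; averaging alone does not close this.

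\textbf{The ``move to $V_0$'' step is the main mechanism, not a fallback, and the order of operations matters.} Condition (iii) is required for all $v\in V$, including $v\in V_0$. If you delete the offending edges first and only afterwards move a vertex with too much loss into $V_0$, the edges are already gone and (iii) fails for that vertex. The paper instead first identifies which vertices would lose too much, moves them into $V_0'$, and only then deletes the offending (red/blue) edges \emph{that do not meet $V_0'$}; this way a moved vertex loses nothing in that step. Your remark that the parameter hierarchy in type (b) is ``handled by carefully choosing $\varepsilon'$ after $L_0'$ is fixed'' is also not correct as stated: the number of clusters $L$ can be as large as $N(\varepsilon',L_0',k)$ (which, for the usual regularity lemma, grows towerlike in $1/\varepsilon'$), so no choice of $\varepsilon'$ makes $\varepsilon' L$ small; the move-to-$V_0$ device is what actually controls the pointwise loss.

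\textbf{Cluster re-equalization and regularity of subclusters are not addressed.} Once vertices have been moved into the exceptional set, the clusters have unequal sizes, so (ii) fails, and (v) for the shrunken clusters is no longer immediate from the regularity of the originals. The paper fixes this with a final refinement step (split each surviving cluster into subclusters of a common size $\lceil \varepsilon n/(4L')\rceil$, moving leftovers to $V_0$) together with an easy proposition that an $\varepsilon'$-regular $k$-tuple restricted to subsets of relative size at least $\rho$ is $(\varepsilon'/\rho)$-regular, which combined with $\varepsilon'\ll\varepsilon$ recovers $\varepsilon$-regularity. Your proof needs this step and this proposition (or an equivalent).
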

The proof is very similar to that of the degree form of the Regularity Lemma for graphs, so we omit it here; for details see \cite{Townsend PhD}.

We now define a type of hypergraph that will be essential in our application of the Weak Hypergraph Regularity Lemma.
\begin{definition}[Reduced Hypergraph]
Let $G=(V,E)$ be a $k$-uniform hypergraph. Given parameters $\varepsilon> 0$, $d \in [0,1)$ and $L_0\geq 1$ we define the reduced hypergraph $R=R(\varepsilon, d, L_0)$ of $G$ as follows. Apply the degree form of the Weak Hypergraph Regularity Lemma to $G$, with parameters $\varepsilon$, $d$, $L_0$ to obtain a spanning subhypergraph $G'$ and a partition $V_0,\dots,V_L$ of $V$, with exceptional set $V_0$ and clusters $V_1,\dots,V_L$. Then $R$ has vertices $V_1,\dots,V_L$, and there exists an edge between $V_{i_1},\dots,V_{i_k}$ precisely when $(V_{i_1},\dots,V_{i_k})_{G'}$ is $\varepsilon$-regular with density greater than $d$.
\end{definition}
The following lemma tells us that this reduced hypergraph (almost) inherits the minimum degree properties of the original hypergraph. The proof is similar to that of the well known version for graphs, but we include it here for completeness.
\begin{lemma}\label{reduced graph edges}
Suppose $c>0$, $k\geq 2$, $1\leq \ell \leq k-1$, $L_0\geq 1$, and $0<\varepsilon \leq d\leq c^3/64$. Let $G$ be a $k$-uniform hypergraph with $\delta_\ell (G)\geq c|G|^{k-\ell}$. Let $R=R(\varepsilon, d, L_0)$ be the reduced hypergraph of $G$. Then at least $\binom{|R|}{\ell}- d^{1/3}(2k)^\ell |R|^\ell$ of the $\ell$-tuples of vertices of $R$ have degree at least $(c-4d^{1/3})|R|^{k-\ell}$.
\end{lemma}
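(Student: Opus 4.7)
The plan is to relate the $\ell$-degree in $R$ of a cluster $\ell$-tuple $T$ to the minimum $\ell$-degree of $G$ by summing, over the $m^\ell$ vertex $\ell$-subsets transversal to $T$ (i.e., having one vertex per cluster of $T$), the $G'$-degrees; properties (iii)--(v) from the Degree Form of the Weak Hypergraph Regularity Lemma then control the errors. The upper bound on this sum will come from counting $G'$-edges whose cluster structure extends $T$ to an edge of $R$, and the lower bound will come from $\delta_\ell(G)\geq cn^{k-\ell}$ together with an averaging argument.

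First I would use (iii) to control the total edge loss. Summing $d_{G'}(v)>d_G(v)-(d+\varepsilon)n^{k-1}$ over $v\in V$ gives $e(G)-e(G')\leq(d+\varepsilon)n^k/k$, and double counting yields $\sum_{S\in\binom{V}{\ell}}(d_G(S)-d_{G'}(S))\leq\binom{k}{\ell}(d+\varepsilon)n^k/k$. For each cluster $\ell$-tuple $T=\{V_{i_1},\dots,V_{i_\ell}\}$ I would set $\mathrm{drop}(T):=(1/m^\ell)\sum_S(d_G(S)-d_{G'}(S))$, with $S$ ranging over the $m^\ell$ transversal $\ell$-subsets. Since each transversal $S$ is transversal to a unique $T$, $\sum_T\mathrm{drop}(T)\cdot m^\ell\leq\binom{k}{\ell}(d+\varepsilon)n^k/k$. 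Markov then bounds the number of ``bad'' tuples $T$ with $\mathrm{drop}(T)\geq d^{1/3}n^{k-\ell}$ by $\binom{k}{\ell}(d+\varepsilon)(n/m)^\ell/(kd^{1/3})$; using $\varepsilon\leq d$, $\binom{k}{\ell}\leq k^\ell/\ell!$, $(n/m)^\ell\leq 2^\ell L^\ell$ (since $|V_0|\leq\varepsilon n$), and $d^{1/3}\leq c/4\leq 1/4$, one checks this is at most $d^{1/3}(2k)^\ell L^\ell$, matching the stated count of non-good tuples.

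For a good $T$, combining $\delta_\ell(G)\geq cn^{k-\ell}$ with the definition of drop gives $\sum_S d_{G'}(S)\geq m^\ell(c-d^{1/3})n^{k-\ell}$. Splitting $d_{G'}(S)$ by whether the edge meets $V_0$: the ``Case B'' contribution (edges of $G'$ through $S$ meeting $V_0$) is at most $|V_0|\binom{n-\ell-1}{k-\ell-1}\leq\varepsilon n^{k-\ell}$ per $S$, so the ``Case A'' total satisfies $\sum_S d_{G'}^A(S)\geq m^\ell(c-d^{1/3}-\varepsilon)n^{k-\ell}$. By (iv) every Case A edge has its $k$ vertices in $k$ distinct clusters, and by (v) its cluster structure $T\cup T'$ (with $T'\subseteq[L]\setminus T$ a $(k-\ell)$-tuple) contributes $0$ if $T\cup T'\notin E(R)$ and at most $m^k$ if $T\cup T'\in E(R)$, so $\sum_S d_{G'}^A(S)\leq\deg_R(T)\cdot m^k$. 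Dividing by $m^k$ and using $n/m\geq L$ gives $\deg_R(T)\geq(c-d^{1/3}-\varepsilon)L^{k-\ell}$. Finally $\varepsilon\leq d\leq c^3/64$ with $c\leq 1$ yields $\varepsilon+d^{1/3}\leq d^{1/3}(1+d^{2/3})\leq(17/16)d^{1/3}\leq 4d^{1/3}$, as required.

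The main technical care lies in bounding the contribution from edges meeting the exceptional set $V_0$: a naive union bound over the $k-\ell$ slots available for a $V_0$-vertex in an edge through a fixed $S$ would introduce a factor of $k$ in the error, giving an extra term $k\varepsilon n^{k-\ell}$ per $S$ which, under only $\varepsilon\leq d\leq c^3/64$, need not absorb into $4d^{1/3}$ when $k$ is large. Using the slot-free bound $|V_0|\binom{n-\ell-1}{k-\ell-1}$ (obtained by summing over the $V_0$-vertex itself and then extending to the remaining $k-\ell-1$ vertices) removes this factor and makes the argument work under the stated hypotheses.
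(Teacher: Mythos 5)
Your argument is correct and reaches the same bound as the paper with essentially the same double-averaging strategy: use property (iii) to control degree loss, average to identify a small set of problematic cluster $\ell$-tuples, then for each remaining $T$ lower-bound the number of $G'$-edges with exactly one vertex in each cluster of $T$ and compare with $\deg_R(T)\cdot m^k$ via (iv) and (v). The only difference is one of packaging: the paper first passes to $G'-V_0$, defines ``bad'' \emph{vertex} $\ell$-tuples by thresholding $\deg_G(A)-\deg_{G'-V_0}(A)$, and then averages over vertex $\ell$-tuples lying in each cluster tuple, whereas you average globally from the start, apply Markov directly to the per-cluster-tuple quantity $\mathrm{drop}(T)$, and peel off the $V_0$-contribution with the bound $|V_0|\binom{n-\ell-1}{k-\ell-1}\leq \varepsilon n^{k-\ell}$ only when it is needed. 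Both routes yield the same constants, and your side remark about avoiding an extra factor of $k$ is not really a danger in either version since the standard count of edges through $S$ meeting $V_0$ already gives the slot-free bound.
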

\begin{proof}
Let $G'$ be the spanning subhypergraph of $G$ obtained by applying the degree form of the Weak Hypergraph Regularity Lemma to $G$ with parameters $\varepsilon$, $d$, $L_0$; let $V_1,\dots,V_L$ denote the vertices of $R$, and let $m$ denote the size of these clusters.

First recall that given any vertex $x\in V(G')$ we know that $d_{G'}(x)>d_G(x)-(d+\varepsilon)|G|^{k-1}$. Note that since $|V_0|\leq \varepsilon |G|$, we have that the number of edges incident to $x$ that contain a vertex in $V_0$ is at most $\varepsilon |G| \binom {|G|}{k-2}\leq \varepsilon |G|^{k-1}$. Hence for all $v\in V(G'-V_0)$, we have that
$$d_{G'-V_0}(v)>d_G(v)-(d+2\varepsilon)|G|^{k-1}\geq d_G(v)-3d|G|^{k-1}.$$
We call an $\ell$-tuple $A$ of vertices of $G'-V_0$ \emph{bad} if $deg_{G'-V_0}(A)\leq deg_G(A)-3d^{1/3}|G|^{k-\ell}$. So for each $v\in V(G'-V_0)$ there are at most $\binom{k-1}{\ell-1} d^{2/3} |G|^{\ell -1}$ bad $\ell$-tuples $A$ with $v\in A$. (This follows by double-counting the number of pairs $(A,e)$ where $A$ is a bad $\ell$-tuple with $v\in A$ and $e\in E(G)\backslash E(G'-V_0)$ is an edge containing $A$.) This in turn implies that in total at most $\binom{k-1}{\ell-1} d^{2/3} |G|^{\ell}$ of the $\ell$-tuples $A$ are bad. Given $1\leq s\leq k$ and an $s$-tuple $(V_{i_1},\dots,V_{i_s})$ of clusters of $R$, we say that an $s$-tuple $A$ of vertices of $G'- V_0$ \emph{lies in} $(V_{i_1},\dots,V_{i_s})$ if $|A\cap V_{i_\alpha}|=1$ for all $\alpha \in \{1,\dots,s\}$. We call an $\ell$-tuple $(V_{i_1},\dots,V_{i_\ell})$ of clusters of $R$ \emph{nice} if there are less than $d^{1/3}m^\ell$ bad $\ell$-tuples $A$ of vertices of $G'-V_0$ which lie in $(V_{i_1},\dots,V_{i_\ell})$. So less than $\binom{k-1}{\ell-1} d^{1/3} |G|^{\ell}/m^{\ell}\leq d^{1/3}(2k)^\ell |R|^\ell$ of the $\ell$-tuples of clusters of $R$ are not nice. Hence it suffices to show that any nice $\ell$-tuple of clusters of $R$ has degree at least $(c-4d^{1/3})|R|^{k-\ell}$ in $R$.

Consider any nice $\ell$-tuple of clusters of $R$, say $(V_{i_1},\dots, V_{i_\ell})$. Let $\mathcal{A}$ denote the set of all $\ell$-tuples $A$ of vertices of $G'- V_0$ which lie in $(V_{i_1},\dots, V_{i_\ell})$ and are not bad. So $|\mathcal{A}|\geq (1-d^{1/3})m^\ell$. Moreover, the number of edges $e$ of $G'-V_0$ with $|e\cap V_{i_\alpha}|=1$ for all $\alpha \in \{1,\dots, \ell\}$ is at least
\begin{equation}\label{contradiction equation}
\sum\limits_{A\in \mathcal{A}} deg_{G'-V_0}(A)\geq |\mathcal{A}| \left(c-3d^{1/3}\right)|G|^{k-\ell}\geq \left(c-4d^{1/3}\right)|G|^{k-\ell}m^\ell.
\end{equation}
Now suppose that the degree in $R$ of $(V_{i_1},\dots, V_{i_\ell})$ is less than $(c-4d^{1/3})|R|^{k-\ell}$. Then the number of $(k-\ell)$-tuples $\{j_1,\dots, j_{k-\ell}\}\in \binom{[L]}{k-\ell}$ for which $(V_{i_1},\dots,V_{i_\ell}, V_{j_1},\dots,V_{j_{k-\ell}})_{G'}$ is $\varepsilon$-regular with density greater than $d$ is less than $(c-4d^{1/3})|R|^{k-\ell}$. Note that at most $m^k$ edges of $G'-V_0$ lie in such a subhypergraph. So the number of edges $e$ of $G'-V_0$ with $|e\cap V_{i_\alpha}|=1$ for all $\alpha \in \{1,\dots, \ell\}$ is less than
$$(c-4d^{1/3})|R|^{k-\ell}m^k\leq (c-4d^{1/3})|G|^{k-\ell}m^\ell,$$
contradicting (\ref{contradiction equation}). This completes the proof.
\end{proof}
The following lemma uses all of the previous results of this section to allow us to convert our fractional matchings into integer ones. We will use the notation $a\ll b$ to mean that we can find an increasing function $f$ for which all of the conditions in the proof are satisfied whenever $a\leq f(b)$.
\begin{lemma}\label{fractional to integer matchings}
Let $k\geq 2$ and $1\leq \ell \leq k-1$ be integers, and let $\varepsilon> 0$. Suppose that for some $b,c\in (0,1)$ and some integer $n_0$, any $k$-uniform hypergraph on $n\geq n_0$ vertices with at least $(1-\varepsilon)\binom{n}{\ell}$ $\ell$-tuples of vertices of degree at least $cn^{k-\ell}$ has a fractional matching of size $(b+\varepsilon)n$. Then there exists an integer $n_0'$ such that any $k$-uniform hypergraph $G$ on $n\geq n_0'$ vertices with $\delta_\ell (G)\geq (c+\varepsilon)n^{k-\ell}$ has an (integer) matching of size at least $bn$.
\end{lemma}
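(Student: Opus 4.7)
The plan is to apply the Weak Hypergraph Regularity Lemma (Lemma \ref{DFWHRL}) to $G$, transfer the near-minimum-$\ell$-degree condition to the reduced hypergraph $R$ via Lemma \ref{reduced graph edges}, invoke the hypothesis on $R$ to obtain a fractional matching there, and finally convert this fractional matching into an integer matching in $G$ using the regularity of the edges of $R$.

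Choose constants $\varepsilon_1$ and $d$ with $\varepsilon_1 \ll d \ll \varepsilon, b, c, 1/k$, and an integer $L_0 \geq n_0$. Apply Lemma \ref{DFWHRL} to $G$ with parameters $\varepsilon_1, d, L_0$, obtaining a spanning subhypergraph $G'$ and a partition $V_0, V_1, \dots, V_L$ of $V(G)$ with $|V_0| \leq \varepsilon_1 n$ and $|V_1| = \cdots = |V_L| = m$; let $R = R(\varepsilon_1, d, L_0)$ be the resulting reduced hypergraph. Applying Lemma \ref{reduced graph edges} with $c + \varepsilon$ in place of $c$, and $d$ small enough that $4 d^{1/3} \leq \varepsilon$ and $d^{1/3}(2k)^\ell \ell! \leq \varepsilon$, gives that at least $(1 - \varepsilon)\binom{L}{\ell}$ of the $\ell$-tuples of $R$ have degree at least $c L^{k-\ell}$. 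Since $L \geq L_0 \geq n_0$, the hypothesis applied to $R$ (with $n$ replaced by $L$) yields a fractional matching $w \colon E(R) \to [0,1]$ of $R$ of size $(b + \varepsilon)L$; passing to an extreme point of the fractional matching polytope, we may further assume $w$ has support of size at most $L$, since at such an extreme point $|E(R)| - L$ nonnegativity constraints must be tight and so at most $L$ coordinates of $w$ are positive.

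To turn $w$ into an integer matching of $G$, for each cluster $V_i$ partition it into disjoint subsets $V_i^E$ of size $\lfloor w(E) m \rfloor$, indexed by edges $E \in E(R)$ with $V_i \in E$ and $w(E) > 0$; this is feasible because $\sum_{E \ni V_i} w(E) \leq 1$. For each edge $E = \{V_{i_1}, \ldots, V_{i_k}\} \in E(R)$ with $w(E) \geq 2\varepsilon_1$, greedily build a matching inside the sub-tuple $(V_{i_1}^E, \ldots, V_{i_k}^E)_{G'}$ by repeatedly selecting an edge and removing its $k$ vertices. As long as every remaining sub-part has size at least $\varepsilon_1 m$, the $\varepsilon_1$-regularity of $(V_{i_1}, \ldots, V_{i_k})_{G'}$ guarantees that the current sub-sub-hypergraph has density at least $d - \varepsilon_1 > 0$, so another edge exists; the process extracts at least $(w(E) - 2\varepsilon_1) m$ disjoint edges per such $E$. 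Since the matchings for different $E$ live in vertex-disjoint sub-tuples, their union is an integer matching of $G$ of size at least $((b + \varepsilon) - 2\varepsilon_1) L m - L \varepsilon_1 m \geq (b + \varepsilon - 3\varepsilon_1)(1 - \varepsilon_1) n \geq b n$ for $\varepsilon_1$ small enough. The main obstacle is this conversion step: edges of $R$ with tiny weight individually cannot contribute many disjoint edges, because their associated sub-parts drop below the $\varepsilon_1 m$ threshold needed for regularity to continue providing edges. Bounding the support of $w$ by $L$ via LP extreme-point theory controls this, since the total weight carried by edges with $w(E) < 2\varepsilon_1$ is then at most $2\varepsilon_1 L$, so these edges cost only a negligible $O(\varepsilon_1) n$ in the final matching size.
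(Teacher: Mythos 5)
Your proof is correct and follows the same broad strategy as the paper: apply the degree form of the Weak Hypergraph Regularity Lemma, transfer the $\ell$-degree condition to the reduced hypergraph $R$ via Lemma~\ref{reduced graph edges}, invoke the hypothesis on $R$ to obtain a fractional matching of size $(b+\varepsilon)L$, and then greedily convert this into an integer matching of $G$ using $\varepsilon$-regularity. The conversion step, however, is implemented differently. The paper sets $K_e:=\lceil(1-2\varepsilon')F(e)m\rceil$ and builds the matching greedily inside the \emph{full} clusters, maintaining the invariant that at most $(1-\varepsilon')m$ vertices of each cluster are ever covered; the rounding error from the ceilings (at most one per edge of $R$ through a cluster, so at most $\binom{L-1}{k-1}$ in total) is absorbed by the $(1-2\varepsilon')$ slack together with $n$ being large relative to $L$. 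You instead pre-partition each cluster $V_i$ into disjoint blocks $V_i^E$ of size $\lfloor w(E)m\rfloor$, discard edges of $R$ with $w(E)<2\varepsilon_1$, and run the greedy extraction separately inside each regular $k$-tuple of blocks. This makes disjointness automatic, but forces you to control the number of low-weight edges — hence your extreme-point observation bounding the support of $w$ by $L$, without which the discarded weight could be as large as $2\varepsilon_1\binom{L}{k}$ rather than $O(\varepsilon_1 L)$. The paper's rounding-up trick sidesteps the support question entirely, which is why no LP fact appears there. Both arguments are sound; yours trades an extra piece of LP theory for a somewhat more transparent vertex-accounting scheme.
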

\begin{proof}
Define $n_0'\in \mathbb{N}$ and new constants $\varepsilon'$ and $d$ such that $0< 1/n_0'\ll \varepsilon' \ll d\ll \varepsilon, c, 1/k, 1/n_0$. Let $G$ be a $k$-uniform hypergraph on $n\geq n_0'$ vertices, with $\delta_\ell (G)\geq (c+\varepsilon)n^{k-1}$. Let $G'$ be the spanning subhypergraph of $G$ obtained by applying the degree form of the Weak Hypergraph Regularity Lemma to $G$ with parameters $\varepsilon'$, $d$, $n_0$. Let $R:=R(\varepsilon', d, n_0)$ be the corresponding reduced hypergraph, and let $L:= |R|$. By Lemma \ref{reduced graph edges} at least $(1-\varepsilon)\binom{L}{\ell}$ $\ell$-tuples of vertices of $R$ have degree at least
$$(c+\varepsilon-4d^{1/3})L^{k-\ell}\geq cL^{k-\ell}.$$
So by the assumption in the statement of the lemma, $R$ has a fractional matching, $F$ say, of size $(b+\varepsilon)L$.

For each $e\in E(R)$, let $K_e:=\lceil(1-2\varepsilon')F(e)m\rceil$, where $m$ is the size of each of the clusters of $R$. Now construct an integer matching, $M$ say, in $G$ by greedily adding to $M$ edges of $G'$ until, for each $e= \{V_{j_1},\dots,V_{j_k}\}\in E(R)$, $M$ contains precisely $K_e$ edges of $(V_{j_1},\dots,V_{j_k})_{G'}$. Note that at each stage of this process the number of vertices in each $V_i\in V(R)$ that would be covered by $M$ is at most
$$\sum\limits_{e: V_i\in e}K_e\leq \sum\limits_{e: V_i\in e}((1-2\varepsilon')F(e)m +1)\leq (1-2\varepsilon')m +\binom{L-1}{k-1}\leq (1-\varepsilon')m.$$
Note also that for every edge $e= \{V_{j_1},\dots,V_{j_k}\}\in E(R)$, we have that $(V_{j_1},\dots,V_{j_k})_{G'}$ is $\varepsilon'$-regular with density $d> \varepsilon'$. So indeed, by the definition of $\varepsilon'$-regularity, it is possible to successively add edges to $M$ in order to obtain a matching $M$ as desired.

Note that the size of $M$ is
$$\sum\limits_{e\in E(R)} K_e \geq \sum\limits_{e\in E(R)} (1-2\varepsilon')F(e)m=(1-2\varepsilon')m(b+\varepsilon)L\geq (1-2\varepsilon')(b+\varepsilon)(1-\varepsilon')n\geq bn.$$
So indeed $G$ has an (integer) matching of size at least $bn$.
\end{proof}
We are now in a position to prove Theorem~\ref{main theorem}.
\removelastskip\penalty55\medskip\noindent{\bf Proof of Theorem~\ref{main theorem}.}
Let $\varepsilon' >0$ and let $0<\varepsilon'' \ll \varepsilon', \varepsilon, 1/k, 1/2(k-d)-a$. Let $n_0\in \mathbb{N}$ be sufficiently large and suppose that $n\geq n_0$. Let $k':= k-d$ and $n':= n-d$. Then
\begin{align*}
f_0^{(a+\varepsilon'')n}(k-d,n-d)&\hspace{0.14cm}=f_0^{(a+\varepsilon'')(n'+d)}(k',n')\leq f_0^{(a+2\varepsilon'')n'}(k',n')\leq m_0^{(a+2\varepsilon'')n'}(k',n')\\
&\stackrel{(\ref{Frankl equation})}{\leq} \left( 1-(1-a-2\varepsilon'')^{k'}+\frac{\varepsilon'}{4}\right)\binom{n'}{k'} \leq \left( 1-(1-a)^{k-d}+\frac{\varepsilon'}{2}\right)\binom{n-d}{k-d}.
\end{align*}
So by Proposition~\ref{main proposition}, if $H$ is a $k$-uniform hypergraph on $n$ vertices such that for at least $(1-\varepsilon'')\binom{n}{d}$ $d$-tuples of vertices $L\in \binom{V(H)}{d}$ we have
$$deg_H(L)\geq \frac{ 1-(1-a)^{k-d}+\varepsilon'/2}{(k-d)!} n^{k-d}
\ge \left( 1-(1-a)^{k-d}+\frac{\varepsilon'}{2}\right)\binom{n-d}{k-d},$$
then $H$ has a fractional matching of size $(a+\varepsilon'')n$. So by Lemma~\ref{fractional to integer matchings}, any $k$-uniform hypergraph $G$ on $n \ge n_0'$ vertices (where $n_0'$ is sufficiently large) with
$$\delta_d(G)\geq \left( 1-(1-a)^{k-d}+\varepsilon' \right)\binom{n-d}{k-d}
 \geq \left(\frac{ 1-(1-a)^{k-d}+\varepsilon'/2}{(k-d)!}+ \varepsilon''\right) n^{k-d} $$
has an (integer) matching of size at least $an$. This gives the upper bound in Theorem~\ref{main theorem}. The lower bound follows from (\ref{fractional matching general lower bound}).
\endproof

We can prove Theorem \ref{minimum degree theorem actual perfect} in a similar way, but to do so we will need to use the absorbing technique as introduced by R\"{o}dl, Ruci\'nski and Szemer\'edi~\cite{RRSone}. More precisely, we use the existence of a small and powerful matching $M_{abs}$ in $G$ which, by `absorbing' vertices, can transform any almost perfect matching into a perfect matching. $M_{abs}$ has the property that whenever $X$ is a sufficiently small set of vertices of $G$ not covered by $M_{abs}$ (and $|X|\in k\mathbb{N}$) there exists a matching in $G$ which covers precisely the vertices in $X\cup V(M_{abs})$. Since this part of the proof of Theorem~\ref{minimum degree theorem actual perfect} is very similar to the corresponding part of the proof of Theorem 1.1 in \cite{Large matchings}, we only sketch it.

\removelastskip\penalty55\medskip\noindent{\bf Proof of Theorem~\ref{minimum degree theorem actual perfect} (sketch).}
Let $\varepsilon >0$ and suppose that $G$ is a $k$-uniform hypergraph on $n$ vertices with minimum $d$-degree at least\COMMENT{COMMENT: This inequality holds, since $d<k/2$ by assumption, so as $d,k$ are integers we have that $k/2-d\geq 1/2$. Hence, as $k\geq 3$,
$$\left(\frac{k-d}{k}-\frac{k-d-1}{k^{k-1}} \right)\geq \left(\frac{k/2+1/2}{k}-\frac{k-d-1}{k^{k-1}} \right)\geq 1/2.$$}
\begin{equation}\label{Absorbing equation}
\left(\frac{k-d}{k}-\frac{k-d-1}{k^{k-1}} +\varepsilon \right)\binom{n-d}{k-d}>\left(\frac{1}{2}+\varepsilon \right)\binom{n-d}{k-d}.
\end{equation}
(\ref{Absorbing equation}) implies that we can use the Strong Absorbing Lemma from \cite{Absorbing} to find an absorbing matching $M_{abs}$ in $G$, and set $G':=G\backslash V(M_{abs})$. Using the degree condition, Theorem \ref{minimum degree theorem perfect} gives us a perfect fractional matching in $G'$ for sufficiently large $n$. Lemma \ref{fractional to integer matchings}{\rm (ii)} then transforms this into an almost perfect integer matching $M_{alm}$ in $G'$. We then extend $M_{alm}\cup M_{abs}$ to a perfect matching of $G$ by using the absorbing property of $M_{abs}$.
\endproof

\section{Concluding remarks}\label{concluding remarks}
Using a similar method to that employed in proving Theorem \ref{minimum degree theorem actual perfect} it is possible to prove a variant of Theorem \ref{Frankl theorem} that verifies Conjecture \ref{Erdos main conjecture} asymptotically for all $s\in \mathbb{N}$ satisfying $s/n\leq a_k$, where $a_k$ is the unique solution in $(0,1/(k+1))$ to
$$1=\frac{1-(1-2a_k)^{k-1}}{(1-a_k)^{k-1}}.$$
For small values of $k$ this allows us to verify Conjecture \ref{Erdos main conjecture} asymptotically for some values of $s$ not covered by Theorem \ref{Frankl theorem}. For example for $k=4$ this allows $s$ to range up to $0.567n/k$. This approach also yields a slight improvement, for small values of $k$, to the range of matching sizes allowed in Theorem \ref{main theorem}, (at the expense of lengthy calculations). But for large $k$ Theorem~\ref{Frankl theorem} gives the better bounds on the matching sizes allowed (as $a_k$ is close to $0.48/k$ in this case). For details see \cite{Townsend PhD}.

\medskip

{\footnotesize \obeylines \parindent=0pt

School of Mathematics
University of Birmingham
Edgbaston
Birmingham
B15 2TT
UK
}
\begin{flushleft}
{\it{E-mail addresses}:}
{\rm{\{d.kuhn, d.osthus, txt238\}@bham.ac.uk}}
\end{flushleft}

\end{document}